\newtheorem{thm}{Theorem}[section]
\newtheorem{lem}[thm]{Lemma}
\newtheorem{prop}[thm]{Proposition}
\newtheorem{cor}[thm]{Corollary}
\newtheorem{exmp}{Example}[section]
\newtheorem{rem}{Remark}
\title{Finite, fiber-preserving group actions on elliptic 3-manifolds}
\author{
  Benjamin Peet\\
  Department of Mathematics\\
  St. Martin's University\\
  Lacey, WA 98503 \\
  \texttt{bpeet@stmartin.edu} \\
}
\begin{document}
\maketitle

\begin{abstract}
In two previous papers the author presented a general construction of finite, fiber- and orientation-preserving group actions on orientable Seifert manifolds. In this paper we restrict our attention to elliptic 3-manifolds. A proof is given that orientation-reversing and fiber-preserving diffeomorphisms of Seifert manifolds do not exist for nonzero Euler class, in particular elliptic 3-manifolds. Each type of elliptic 3-manifold is then considered and the possible group actions that fit the given construction. This is shown to be all but a few cases that have been considered elsewhere. Finally, a presentation for the quotient space under such an action is constructed and a specific example is generated.
\end{abstract}

% keywords can be removed
\keywords{geometry; topology; $3$-manifolds; finite group actions; Seifert fiberings; elliptic}

\section{Introduction}

\subsection{Discussion of results}

In previous papers \cite{peet2018} and \cite{peet2018finite} we considered orientable Seifert manifolds and the possible finite groups that can act fiber- and orientation-preservingly.

The main results in those papers established that firstly:

\begin{thm}
Let $M$ be a closed, compact, and orientable Seifert $3$-manifold that fibers over an orientable base space. Let $\varphi:G\rightarrow Diff_{+}^{fp}(M)$ be a finite group action on $M$ such that the obstruction class can expressed as $$b=\sum_{i=1}^{m}(b_{i}\cdot\#Orb_{\varphi}(\alpha_{i}))$$ for a collection of fibers $\{\alpha_{1},\ldots,\alpha_{m}\}$ and integers $\{b_{1},\ldots,b_{m}\}$. Then $\varphi$ is an extended product action.
\end{thm}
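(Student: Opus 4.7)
The plan is to use the hypothesis on the obstruction class to equivariantly reduce to a situation where the Seifert manifold has a product structure over which the group action factors, and then to glue back in the distinguished orbits of fibers in a way that realizes the definition of an extended product action. First I would choose $G$-invariant fibered tubular neighborhoods $N(\alpha_{i})$ enlarged to contain the full orbit $\mathrm{Orb}_{\varphi}(\alpha_{i})$, together with neighborhoods of the exceptional fibers; these can be taken invariant because the action is fiber-preserving. Let $M^{*}$ denote the complement of the interiors of all these neighborhoods.

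The next step is to exploit the decomposition $b=\sum_{i=1}^{m}(b_{i}\cdot\#\mathrm{Orb}_{\varphi}(\alpha_{i}))$. The natural strategy is to redistribute, equivariantly, a twist of $-b_{i}$ into each fiber in $\mathrm{Orb}_{\varphi}(\alpha_{i})$, producing a new Seifert fibering of $M^{*}$ with boundary whose Euler number vanishes. An equivariant classification of circle bundles of zero Euler class over a surface with boundary then lets me identify $M^{*}$ $G$-equivariantly with a product $F^{*}\times S^{1}$, where $F^{*}$ is the base orbifold punctured at the exceptional fibers and at the images of the $\alpha_{i}$. Using orientability of both $M$ and the base together with the orientation-preserving hypothesis on $\varphi$, the induced $G$-action on this product can be decomposed into a $G$-action on $F^{*}$ and a $G$-action on $S^{1}$, which is precisely a product action on $M^{*}$.

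Finally, I would re-glue the fibered neighborhoods of the orbits $\mathrm{Orb}_{\varphi}(\alpha_{i})$ and of the exceptional fibers equivariantly. Each such neighborhood is a disjoint union of standard fibered solid tori permuted by $G$, and the gluing data is determined by the $b_{i}$ together with the original Seifert invariants; compatibility of this gluing with the product action on $M^{*}$ is exactly what the definition of an extended product action is designed to encode, so the result follows. The main obstacle I anticipate is the middle step: showing that the product structure on $M^{*}$ can be chosen genuinely $G$-equivariantly rather than only up to isotopy. This will require an equivariant section argument over $F^{*}$, relying on the fact that its boundary components, where sections can first be constructed, are freely permuted by $G$ in a controlled way, and that the resulting partial section extends over the interior by obstruction theory once the Euler class has been killed.
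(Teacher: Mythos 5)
Your overall outline --- drill out invariant fibered neighborhoods of the orbits of the $\alpha_{i}$ and of the exceptional fibers, identify the complement equivariantly with $S^{1}\times F$, and recover $\varphi$ as the extension of a product action across the Dehn fillings --- is the same strategy as the construction this theorem is imported from (the present paper only cites it, and in Section 3 it uses exactly this decomposition into $\hat{M}$ with a fibering product structure and a collection of fibered solid tori $X$). But two of your steps have genuine problems. First, the hypothesis $b=\sum_{i=1}^{m}(b_{i}\cdot\#\mathrm{Orb}_{\varphi}(\alpha_{i}))$ is not what makes the complement a trivial circle bundle: over a compact surface with nonempty boundary every orientable circle bundle is trivial, so ``killing the Euler number of $M^{*}$'' is automatic and cannot be where the hypothesis does its work. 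Its actual role is at the gluing stage that you dismiss as automatic: equivariance forces all fibers in one orbit to be filled with the same slope, so one must be able to renormalize the Seifert invariants so that every fiber of $\mathrm{Orb}_{\varphi}(\alpha_{i})$ carries the identical coefficient $(1,b_{i})$ with the total equal to $b$; that is precisely the displayed condition, and without making this matching-of-slopes argument explicit you have not used the hypothesis at all (and the conclusion fails without it --- the paper's Remark 2 points to an action violating the condition).

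Second, the step you yourself flag as the main obstacle --- choosing the product structure on $M^{*}$ genuinely $G$-equivariantly so that the action becomes a product action --- is the heart of the theorem, and your sketch rests on the claim that the boundary components are freely permuted by $G$. That is false in general: boundary tori have nontrivial stabilizers (the paper's Section 6 computes them as $\mathbb{Z}_{m}$ or $\mathbb{Z}_{m}\times\mathbb{Z}_{l}$, and in the dihedral case fiber-orientation-reversing elements also occur), and it is exactly these stabilizers, acting on the boundary tori and then on the filling solid tori, that must be simultaneously conjugated to pairs of rotations compatible with one global $S^{1}$-coordinate. So the equivariant-section/obstruction-theory argument has to handle nontrivial (and possibly orientation-reversing-on-the-fiber) stabilizers, for instance by first straightening the stabilizer action on each boundary torus and on the circle factor and only then extending an invariant horizontal structure over $F^{*}$; as written, the freeness assumption leaves the central step unproved.
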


Where an extended product action is intuitively a product action on an orientable surface with boundary cross $S^{1}$ extended across Dehn fillings of the boundary tori.

Secondly, it was shown that:

\begin{cor}
 Suppose that $\varphi:G\rightarrow Diff(M)$ is a finite group action on an orientable Seifert manifold with a non-orientable base space. Then provided that the unique lifted group action $\tilde{\varphi}:G\rightarrow Diff(\tilde{M})$ satisfies the obstruction condition, $G$ is isomorphic to a subgroup of $\mathbb{Z}_{2}\times H$ where $H$ is a finite group that acts orientation-preservingly on the orientable base space of $\tilde{M}$.
\end{cor}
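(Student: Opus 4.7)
The plan is to lift the action to the orientation double cover $\tilde M$, apply Theorem~1.1 there, and use the deck involution of $\tilde B\to B$ to convert the induced base action into an orientation-preserving one, at the cost of a $\mathbb{Z}_2$ extension.

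First I would pass to the unique lift $\tilde\varphi$ on the orientation double cover $p\colon\tilde M\to M$, which has orientable base $\tilde B$. A standard lifting argument shows the deck involution $\tau$ of $p$ commutes with every $\tilde\varphi(g)$: the conjugate $\tilde\varphi(g)\tau\tilde\varphi(g)^{-1}$ covers $\mathrm{id}_M$ and so is a deck transformation, and it is conjugate to the nontrivial $\tau$, hence equals $\tau$. Since $\tilde\varphi$ satisfies the obstruction condition by hypothesis, Theorem~1.1 applies and yields an extended product action on $\tilde M$; in particular the restriction $\alpha(g):=\tilde\varphi(g)|_{\tilde B}$ gives a homomorphism $\alpha\colon G\to\mathrm{Diff}(\tilde B)$.

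Next I would use the deck involution $\tau_B$ of $\tilde B\to B$, which is orientation-reversing (as $B$ is non-orientable) and commutes with each $\alpha(g)$ by the same lifting argument. Define $\epsilon\colon G\to\mathbb{Z}_2$ by $\epsilon(g)=0$ if $\alpha(g)\in\mathrm{Diff}^+(\tilde B)$ and $\epsilon(g)=1$ otherwise; $\epsilon$ is a homomorphism since composing two orientation-reversing maps is orientation-preserving. Then the corrected map $\beta(g):=\tau_B^{\epsilon(g)}\alpha(g)$ takes values in $\mathrm{Diff}^+(\tilde B)$, and the centrality of $\tau_B$ makes $\beta\colon G\to\mathrm{Diff}^+(\tilde B)$ a homomorphism. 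Letting $H$ be $G$ equipped with the orientation-preserving action $\beta$ on $\tilde B$, the map $\Phi\colon G\to\mathbb{Z}_2\times H$ given by $\Phi(g)=(\epsilon(g),g)$ is manifestly an injective homomorphism, giving the desired embedding.

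The principal subtlety I anticipate is if one insists that $H$ denote the faithful image $\beta(G)\subseteq\mathrm{Diff}^+(\tilde B)$: then the kernel $K:=\ker\beta$ must have order at most~$2$ and split off as the $\mathbb{Z}_2$ factor. Here the non-orientability of $B$ is essential. For $k\in K$, the element $\varphi(k)$ acts on each $S^1$-fiber by rotation through some angle $\theta$; consistency with the non-orientable monodromy of $M\to B$ (traversing an orientation-reversing loop in $B$ reverses the fiber's orientation) forces $\theta\equiv-\theta\pmod{2\pi}$, so $K\subseteq\mathbb{Z}_2$. Splitting $K$ off from $G$ then follows from centrality, as the fiber half-turn lies in the center of $O(2)$ and commutes with the entire extended product action.
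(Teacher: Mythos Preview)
This corollary is not proved in the present paper; it is quoted from the author's earlier work and then \emph{used} in the proof of Corollary~4.8. From that usage it is clear that $H$ is meant to be (isomorphic to) an actual subgroup of $\mathrm{Diff}^{+}(\tilde B_{U})$, not merely an abstract group carrying some possibly unfaithful action. Under that reading your main embedding $\Phi(g)=(\epsilon(g),g)$ with $H:=G$ is vacuous: every finite group embeds in $\mathbb{Z}_{2}\times G$, so nothing has been shown.

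Your final paragraph, which addresses the substantive case $H=\beta(G)$, contains a genuine gap. First, your description of $K=\ker\beta$ is inaccurate: if $\epsilon(k)=1$ then $\beta(k)=\mathrm{id}$ forces $\alpha(k)=\tau_{B}$, so $\tilde\varphi(k)$ acts on the base as the deck involution rather than trivially, and your monodromy constraint (``acts on each $S^{1}$-fiber by rotation through $\theta$'') only applies to $\ker\alpha$, not to all of $\ker\beta$. Second, and fatally, the assertion ``splitting $K$ off from $G$ then follows from centrality'' is false in general: a central $\mathbb{Z}_{2}$ need not be a direct factor (take $\mathbb{Z}_{4}$, or $Q_{8}$ with its central $\mathbb{Z}_{2}$ and quotient $\mathbb{Z}_{2}\times\mathbb{Z}_{2}$). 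Centrality gives you a central extension $1\to K\to G\to\beta(G)\to 1$, but not an embedding $G\hookrightarrow K\times\beta(G)$.

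What the cited argument actually provides---and what you can see being invoked in the proof of Corollary~4.8---is that on the product model $S^{1}\times F$ for $\hat{\tilde M}$ the $S^{1}$-coordinate of the lifted action is literally $u\mapsto \epsilon(g)u$ with $\epsilon(g)\in\{\pm1\}$; one then reads off an \emph{injective} homomorphism $G\to\{\pm1\}\times\mathrm{Diff}(F)$ directly, with no quotient or splitting step required. Your monodromy observation is exactly the reason the fiber component collapses to $\pm1$, but it has to be applied to the whole action on $\hat{\tilde M}$ up front, not to a kernel after the fact.
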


These two results will allow us to consider the elliptic 3-manifolds in particular and present the possible finite, fiber- and orientation-preserving groups that can act on them.

We then present a proof that all finite, fiber-preserving actions on Seifert manifolds with non-zero Euler class must be orientation-preserving and in particular apply this to elliptic manifolds.

Finally, we consider the quotient orbifolds that will arise under the given actions and present a thorough example of one such action.

\subsection{Preliminary definitions}

Throughout we consider $M$ to be an oriented smooth manifold of dimension $3$ and without boundary. $G$ is always assumed to be a finite group. We denote $Diff(M)$ as the group of self-diffeomorphisms of $M$, and then define a $G$-action on $M$ as an injection $\varphi:G\rightarrow Diff(M)$. We use the notation $Diff_{+}(M)$ for the group of orientation-preserving self-diffeomorphisms of $M$. 

$M$ is further assumed to be an orientable Seifert-fibered manifold. That is, $M$ can be decomposed into disjoint fibers where each fiber is a simple closed curve and each fiber has a fibered neighborhood which can be mapped under a fiber-preserving map onto a solid fibered torus.

A Seifert bundle is a Seifert manifold $M$ along with a continuous map $p:M\rightarrow B$ where $p$ identifies each fiber to a point. For clarity, we denote the underlying space of $B$ as $B_{U}$. 

We use the normalized notation $(g,\epsilon|(q_{1},p_{1}),\ldots,(q_{n},p_{n}),(1,b))$ to indicate an orientable Seifert manifold with normalized Seifert invariants $(q_{1},p_{1}),\ldots,(q_{n},p_{n})$, obstruction class $b$, and $\epsilon=o_{1}$ if the base space is orientable and $\epsilon=n_{2}$ is the base space is not orientable.

The Euler class of a Seifert manifold with normalized Seifert manifold is given by $e=-(b+\sum_{i=1}^{n}\frac{p_{i}}{q_{i}})$ and an elliptic Seifert manifold is such that $e=0$ and the base orbifold has positive Euler characteristic.

We say a $G$-action is fiber-preserving if for any fiber $\gamma$ and any $g\in G$, $\varphi(g)(\gamma)$ is some fiber of $M$. We use the notation $Diff^{fp}(M)$ for the group of fiber-preserving self-diffeomorphisms of $M$ (given some Seifert fibration). Given a fiber-preserving $G$-action, there is an induced action $\varphi_{B_{U}}:G_{B_{U}}\rightarrow Diff(B_{U})$ on the underlying space $B_{U}$ of the base space $B$. 

Given a finite action $\varphi:G\rightarrow Diff^{fp}(M)$, we define the orbit number of a fiber $\gamma$ under the action to be $\#Orb_{\varphi}(\gamma)=\#\{\alpha|\varphi(g)(\gamma)=\alpha\textrm{ for some }g\in G\}$. 

If we have a manifold $M$, then a product structure on $M$ is a diffeomorphism $k:A\times B\rightarrow M$ for some manifolds $A$ and $B$. \cite{lee2003smooth} If a Seifert-fibered manifold $M$ has a product structure $k:S^{1}\times F\rightarrow M$ for some surface $F$ and $k(S^{1}\times\{x\})$ are the fibers of $M$ for each $x\in F$, then we say that $k:S^{1}\times F\rightarrow M$ is a fibering product structure of $M$. 

Given that the first homology group (equivalently the first fundamental group) of a torus is $\mathbb{Z}\times\mathbb{Z}$ generated by two elements represented by any two nontrivial loops that cross at a single point, we can use the meridian-longitude framing from a product structure as representatives of two generators. If we have a diffeomorphism $f:T_{1}\rightarrow T_{2}$ and product structures $k_{i}:S^{1}\times S^{1}\rightarrow T_{i}$, then we can express the induced map on the first homology groups by a matrix that uses bases for $H_{1}(T_{i})$ derived from the meridian-longitude framings that arise from $k_{i}:S^{1}\times S^{1}\rightarrow T_{i}$. We denote this matrix as $\left[\begin{array}{cc}
a_{11} & a_{12}\\
a_{21} & a_{22}
\end{array}\right]_{k_{2}}^{k_{1}}$ .

We say that a $G$-action $\varphi:G\rightarrow Diff(A\times B)$ is a product action if for each $g\in G$, the diffeomorphism $\varphi(g):A\times B\rightarrow A\times B$ can be expressed as $(\varphi_{1}(g),\varphi_{2}(g))$ where $\varphi_{1}(g):A\rightarrow A$ and $\varphi_{2}(g):B\rightarrow B$. Here $\varphi_{1}:G\rightarrow Diff(A)$ and $\varphi_{2}:G\rightarrow Diff(B)$ are not necessarily injections. 

Given an action $\varphi:G\rightarrow Diff(M)$ and a product structure $k:A\times B\rightarrow M$, we say that $\varphi$ leaves the product structure $k:A\times B\rightarrow M$ invariant if $\psi(g)=k^{-1}\circ\varphi(g)\circ k$ defines a product action $\psi:G\rightarrow Diff(A\times B)$.

Suppose that we now have a fibering product structure $k:S^{1}\times F\rightarrow M$. We then say that each boundary torus is positively oriented if the fibers are given an arbitrary orientation and then each boundary component of $k(\{u\}\times F)$ is oriented by taking the normal vector to the surface according the orientation of the fibers.

We consider two particular types of 3-orbifold. We define the solid torus with exceptional core $V(k)$ to be a solid torus with an exceptional set of order $k$ running along the core loop of the solid torus. We define the Conway ball $B(k)$ to be a ball with exceptional set consisting of two arcs of order two joined by an arc of order $k$ according to Figure 1 below:

\begin{figure}[ht]
\centering
\includegraphics[height=3cm]{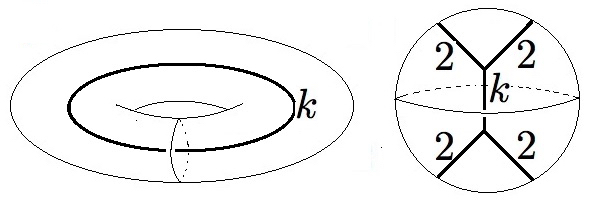}
\caption{A solid torus $V(k)$ and Conway ball $B(k)$ }
\end{figure}

\section{Preliminary results}

We begin with some preliminary results that we will use in the next section regarding orientation-reversing diffeomorphisms.

\begin{lem}Let $F$ be an orientable surface with boundary. Let the boundary be positively oriented according to some orientation of $F$ and $f:F\rightarrow F$ be a diffeomorphism. Then $f$ is orientation-preserving on $F$ if and only if $f$ is orientation-preserving between some pair of boundary components.
\end{lem}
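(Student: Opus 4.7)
The plan is to reduce the lemma to a pointwise linear-algebra computation at the boundary, exploiting the fact that the positive orientation on each boundary component is canonically induced from the orientation of $F$ by the outward-normal-first convention. Once this is set up, both directions fall out of the same argument applied to $df_{p}$ at a boundary point $p$.

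For the forward direction, I would assume $f$ is orientation-preserving on $F$. Pick any boundary component $C$ and any $p\in C$. Since $f$ is a diffeomorphism sending $\partial F$ to $\partial F$, the differential $df_{p}\colon T_{p}F\to T_{f(p)}F$ must send outward-pointing normals to outward-pointing normals (otherwise a small inward push-off of $p$ would be mapped outside $F$). Write $T_{p}F=\mathbb{R}\cdot n_{p}\oplus T_{p}C$ where $n_{p}$ is outward, and choose the basis $(n_{p},v_{p})$ of $T_{p}F$ to be positive, so that $v_{p}$ represents the induced orientation on $C$; similarly at $f(p)$. Since $df_{p}$ preserves the orientation of $T_{p}F$ and preserves the outward ray, it must preserve the sign of the tangential component, i.e.\ send $v_{p}$ to a positive multiple of $v_{f(p)}$ modulo $n_{f(p)}$. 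Hence $f|_{C}\colon C\to f(C)$ is orientation-preserving for every boundary component $C$, and in particular for some pair.

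For the reverse direction, I would prove the contrapositive: if $f$ is orientation-reversing on $F$, then $f|_{C}$ is orientation-reversing for every boundary component. The argument is exactly the same pointwise computation as above, except that $df_{p}$ now reverses the orientation of $T_{p}F$. Since $df_{p}$ still sends outward normals to outward normals (this step depends only on $f$ being a diffeomorphism of $F$, not on its orientation behaviour), the tangential part must now flip sign, so $f|_{C}$ reverses orientation. Contrapositively, if $f|_{C}$ is orientation-preserving on any single boundary component (hence any pair), then $f$ must be orientation-preserving on $F$.

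Combining the two directions gives the equivalence. The only real point to get right, and thus the (minor) main obstacle, is the observation that the outward-normal direction is preserved by $df_{p}$ at boundary points independently of whether $f$ preserves or reverses the orientation of $F$; this is what isolates the tangential behaviour on $\partial F$ as a faithful detector of the orientation behaviour on $F$. Everything else is bookkeeping with the convention that $(\text{outward normal},\text{positive boundary tangent})$ is a positive basis of $F$.
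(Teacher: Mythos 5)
Your proof is correct, and it takes a somewhat different route from the paper's. The paper argues globally on a collar: it looks at regular (annular) neighborhoods of a boundary component and its image, observes that with respect to parameterizations of these annuli the map is either a ``rotation'' or a ``reflection,'' and reads off simultaneously whether the boundary orientation and the orientation of $F$ are preserved. You instead work pointwise at a boundary point $p$, using the decomposition $T_{p}F=\mathbb{R}\cdot n_{p}\oplus T_{p}(\partial F)$ and the key observation that $df_{p}$ carries outward-pointing vectors to outward-pointing vectors regardless of orientation behaviour, so that $\det df_{p}$ and the sign of the tangential component along the boundary agree; this is the standard ``a diffeomorphism preserves orientation iff it preserves the induced boundary orientation'' argument. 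Your version is more rigorous than the paper's sketch (the rotation/reflection dichotomy for the collar map is exactly what your linear-algebra computation justifies), while the paper's collar picture is closer in spirit to how the statement is then promoted to the fibered 3-manifold setting in Corollary 2.2. One small point common to both arguments: the equivalence ``preserving between \emph{some} pair of boundary components iff preserving on $F$'' uses that $F$ is connected (so that the sign of $\det df$ is constant and the preserve/reverse dichotomy holds globally); you use this implicitly when passing from one boundary component to the conclusion for $F$, just as the paper does, so it is not a gap relative to the paper, but it would be worth stating.
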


\begin{proof}
In a regular neighborhood of two exchanged boundary components (they may be the same), the diffeomorphism is either a reflection or a rotation (given parameterizations of the annuli). If it is a reflection, the orientation on the boundary is reversed and and the orientation on $F$ is reversed. If it is a rotation, the orientation on the boundary is preserved and and the orientation on $F$ is preserved. 
\end{proof}

\begin{cor} Let $\hat{M}$ be an oriented trivially Seifert fibered 3-manifold with positively oriented boundary $\partial\hat{M}=T_{1}\cup\ldots\cup T_{n}$. Then a fiber-preserving diffeomorphism $f:\hat{M}\rightarrow\hat{M}$ is orientation-preserving if and only if $f$ is orientation-preserving between some pair of boundary tori.
\end{cor}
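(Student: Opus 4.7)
The plan is to use the trivial fibration to identify $\hat{M}$ with $S^{1}\times F$ for an orientable surface $F$ with boundary, reduce the orientation behavior of $f$ on $\hat{M}$ and on each boundary torus to that of the induced diffeomorphism $\bar{f}$ on $F$ together with a sign $\epsilon_{fib}\in\{\pm 1\}$ recording the effect of $f$ on fiber orientation, and then invoke the preceding lemma.

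First I would fix a fibering product structure $k:S^{1}\times F\rightarrow\hat{M}$ coming from the trivial Seifert fibering and conjugate $f$ through $k$ to obtain a fiber-preserving self-diffeomorphism of $S^{1}\times F$. Since $f$ sends fibers to fibers, it descends to a diffeomorphism $\bar{f}:F\rightarrow F$, and in coordinates $(\theta,x)$ it has the form $f(\theta,x)=(g(\theta,x),\bar{f}(x))$. The sign of $\partial_{\theta}g$ at any base point is a continuous $\{\pm 1\}$-valued function on the connected surface $F$, so it is a single constant $\epsilon_{fib}$. Computing the Jacobian determinant then gives $\det Df=\epsilon_{fib}\cdot\det D\bar{f}$, so $f$ is orientation-preserving on $\hat{M}$ if and only if $\epsilon_{fib}$ agrees in sign with the orientation behavior $\epsilon_{F}$ of $\bar{f}$ on $F$.

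Next I would restrict $f$ to a boundary torus $T_{i}=k(S^{1}\times\partial_{i}F)$ sent to some $T_{j}=k(S^{1}\times\partial_{j}F)$. By the definition of positively oriented boundary, the orientation on each $T_{i}$ is determined by the fiber orientation together with the induced orientation on $\partial_{i}F$ coming from the orientation on $F$. Consequently the orientation behavior of $f:T_{i}\rightarrow T_{j}$ factors as $\epsilon_{fib}\cdot\epsilon_{i}$, where $\epsilon_{i}\in\{\pm 1\}$ is the orientation behavior of $\bar{f}$ restricted to this pair of boundary components with their induced orientations. A key observation is that because these boundary orientations are induced from the orientation of $F$, if $\bar{f}$ is orientation-preserving on $F$ then $\epsilon_{i}=+1$ for every pair it identifies, while if $\bar{f}$ reverses orientation on $F$ then $\epsilon_{i}=-1$ for every such pair.

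Finally I would apply the previous lemma to $\bar{f}$: the condition $\epsilon_{F}=+1$ is equivalent to the existence of at least one pair of boundary components on which $\bar{f}$ is orientation-preserving, i.e.\ some $\epsilon_{i}=+1$. Combining this with $f$ being orientation-preserving on $\hat{M}$ iff $\epsilon_{fib}\epsilon_{F}=+1$, and orientation-preserving between $(T_{i},T_{j})$ iff $\epsilon_{fib}\epsilon_{i}=+1$, yields the corollary in both directions. The main thing to be careful about is the bookkeeping around the positively-oriented-boundary convention, making sure that the sign $\epsilon_{i}$ appearing on the torus really is the orientation behavior of $\bar{f}$ between the corresponding boundary circles of $F$; once that is pinned down, the result follows immediately from the lemma.
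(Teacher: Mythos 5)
Your argument is correct and is essentially the paper's own proof: both reduce $f$ through the fibering product structure to the projected map $\bar{f}$ on $F$, track the sign of $f$ on the fiber direction, and then invoke Lemma 2.1 (together with the fact, implicit in that lemma's proof, that $\bar{f}$ behaves the same way on every exchanged pair of boundary circles as it does on $F$). Your version just makes the sign bookkeeping explicit via the Jacobian, where the paper phrases the same computation as a case split on whether $f$ preserves or reverses the fiber orientation.
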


\begin{proof}
Firstly, there is a fibering product structure $k:S^{1}\times F\rightarrow\hat{M}$. Suppose that the diffeomorphism preserves the orientation of the fibers. Then the projected diffeomorphism on $F$ must be orientation-preserving. By Lemma 2.1, this is if and only if it is orientation-preserving between some pair of boundary components. As the diffeomorphism preserves the orientation of a fiber, this is equivalent to $f$ being orientation-preserving between some pair of boundary tori. 

If now we suppose that the diffeomorphism reverses the orientation of the fibers. Then the projected diffeomorphism on $F$ must be orientation-reversing. By Lemma 2.1, this is if and only if it is orientation-reversing between some pair of boundary components. As the diffeomorphism reverses the orientation of a fiber, this is equivalent to $f$ being orientation-preserving between some pair of boundary tori. 
\end{proof}

\section{Conditions for an orientation-reversing action}

We now use the previous section to establish some results about the conditions under which an orientation-reversing action is possible.

Firstly, a condition on the order of critical fibers:

\begin{prop} All finite, fiber-preserving actions on an orientable Seifert 3-manifold fibering over an orientable base space with at least one critical fiber of order greater than two are orientation-preserving.
\end{prop}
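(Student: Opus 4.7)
The plan is to argue by contradiction: assume some element $f=\varphi(g)$ is orientation-reversing, pick a critical fiber $\alpha$ of order $q>2$ with normalized invariant $(q,p)$, and derive a contradiction by examining $f$ near $\alpha$. I would equivariantly excise fibered solid torus neighborhoods of all critical fibers to produce a trivially Seifert-fibered $\hat M$ with positively oriented toral boundary on which $f$ restricts to an orientation-reversing fiber-preserving self-diffeomorphism. By Corollary 2.2, $f$ is then orientation-reversing on every pair of boundary tori, and in particular on $T=\partial V_\alpha \to T'=\partial V_{f(\alpha)}$.

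Next I would translate this into a relation on Seifert invariants. Take the basis $(s,h)$ of $H_1(T)$ coming from a cross-section and the fiber, so that the solid-torus meridian is $m=qs+ph$, and analogously $(s',h')$, $m'=qs'+p'h'$ on $T'$ (with the same $q$, since a fiber-preserving diffeomorphism preserves the order of a critical fiber). Fiber-preservation yields $f_{*}(h)=\delta h'$, the extension of $f$ across the solid tori yields $f_{*}(m)=\epsilon m'$, and orientation-reversal on $T\to T'$ gives $\det f_{*} = \epsilon\delta = -1$. A short calculation comparing $h'$-coefficients in $f_{*}(qs+ph)=\epsilon(qs'+p'h')$ then yields $p' \equiv -p \pmod{q}$.

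If $f(\alpha)=\alpha$, then $p'=p$ and the relation becomes $2p\equiv 0\pmod{q}$; since $\gcd(p,q)=1$ and $q>2$, this is impossible. Hence no orientation-reversing element of $G$ can stabilize a critical fiber of order greater than two.

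The main obstacle I foresee is ruling out the case $f(\alpha)\neq\alpha$, where $f$ permutes critical fibers of order $q$ pairing $(q,p)$-fibers with $(q,q-p)$-fibers. To handle this I would study the induced action of $G$ on the finite set of order-$q$ critical fibers; every stabilizer is orientation-preserving by the previous step, so each orientation-reversing element acts fixed-point-freely on that set. Combining this with the requirement that the multiset of normalized invariants of $M$ be preserved, I would aim either to reduce to the fixed case by replacing $g$ with a suitable power or conjugate, or to derive a contradiction from the constraints imposed on the obstruction $b$ by the orientable base.
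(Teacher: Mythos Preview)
Your setup and the case $f(\alpha)=\alpha$ track the paper's argument closely: both drill out fibered solid tori to obtain a trivially fibered $\hat M$, invoke Corollary~2.2 to see that the restriction to any pair of boundary tori is orientation-reversing, and then read off an arithmetic constraint from ``extends over the solid torus'' plus ``fiber-preserving''. Your congruence $p'\equiv -p\pmod q$ and the resulting contradiction when $\alpha$ is fixed are correct.

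The genuine gap is exactly the case you flag, $f(\alpha)\neq\alpha$, and your proposed remedies (orbit analysis on the set of order-$q$ critical fibers, passing to powers or conjugates, constraints on $b$) are neither completed nor the route the paper takes. The paper does \emph{not} split into cases. It works throughout in the meridian--longitude frames $k_{\partial V_i}$, $k_{\partial V_j}$ on the solid tori rather than in your section--fiber frame on $\partial\hat M$: there the extension condition forces the matrix of $d|_{\partial V_j}^{-1}\circ\hat f\circ d|_{\partial V_i}$ to be $\pm\left(\begin{smallmatrix}-1&0\\a&1\end{smallmatrix}\right)$, and the paper uses that in these framings the regular fiber is the \emph{same} curve $(-q_i,y_i)=(-q_j,y_j)$. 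Applying the matrix to this common vector yields $-aq_i+y_i=-y_i$, hence $aq_i=2y_i$; since $\gcd(q_i,y_i)=1$ this forces $q_i\mid 2$, uniformly in $i,j$. In your basis the analogous step only produces $p'\equiv -p\pmod q$, which is precisely why the non-fixed case survives for you. The missing ingredient is this choice of solid-torus framing in which the fiber class has identical coordinates on both boundary components; without it, your two-fiber case does not close, and the orbit/power arguments you sketch would need substantial further work to substitute for it.
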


\begin{proof}
Suppose for contradiction that there exists a periodic, fiber-preserving and orientation-reversing diffeomorphism $f:M\rightarrow M$. 

We begin with normalized invariants for $M=(g,o_{1}|(q_{1},p_{1}),\ldots,(q_{n},p_{n}),(1,b))$.

We then take a regular fiber $\gamma$ with $\#Orb_{f}(\gamma)=l$ for some $l$. Then adjust the invariants to yield $M=(g,o_{1}|(q_{1},p_{1}),\ldots,(q_{n},p_{n}),(1,b_{1}),\ldots,(1,b_{l}))$ where each $(1,b_{i})$ refer to a fiber in $Orb_{f}(\gamma)$. Necessarily, $\sum_{i=1}^{l}b_{i}=b$.

We can then proceed as in \cite{peet2018} to yield a manifold $\hat{M}$ with fibering product structure $k_{\hat{M}}:S^{1}\times F\rightarrow\hat{M}$ and a collection of solid tori $X$ with product structure $k_{X}:S^{1}\times(D_{1}\cup\ldots\cup D_{n+l})\rightarrow X$. 

We can also now define a restricted map $\hat{f}\in Diff(\hat{M})$. Suppose that the filling of $T_{i}$ yields a critical fiber of order greater than 2. 

Suppose that $\hat{f}(T_{i})=T_{j}$. It could be that $i=j$.

According to the given product structures (with positively oriented restrictions on the boundary) we then have the following homological diagram:

$$\begin{array}{ccccc}
 &  & (d|_{\partial V_{i}})_{*}\\
 & H_{1}(T_{i}) & \leftarrow & H_{1}(\partial V_{i})\\
\hat{f}_{*} & \downarrow &  & \downarrow & (d|_{\partial V_{j}}^{-1}\circ\hat{f}\circ d|_{\partial V_{i}})_{*}\\
 & H_{1}(T_{j}) & \leftarrow & H_{1}(\partial V_{j})\\
 &  & (d|_{\partial V_{j}})_{*}
\end{array}$$

Now, $f:M\rightarrow M$ is orientation-reversing and extends into the solid tori $V_{i},V_{j}$, hence $(d|_{\partial V_{j}}^{-1}\circ\hat{f}\circ d|_{\partial V_{i}})_{*}=\pm\left[\begin{array}{cc}
1 & 0\\
a & 1
\end{array}\right]_{k_{\partial V_{j}}}^{k_{\partial V_{i}}}$ or $\pm\left[\begin{array}{cc}
-1 & 0\\
a & 1
\end{array}\right]_{k_{\partial V_{j}}}^{k_{\partial V_{i}}}$. By Corollary 2.2, we must have the second case.

Then according to the framings on $V_{i},V_{j}$, the fibrations are given by a $(-q_{i},y_{i})=(-q_{j},y_{j})$ curve where $q_{i}=q_{j}>2$. $\hat{f}$ must preserve the fibration hence:

$$\pm\left[\begin{array}{cc}
-1 & 0\\
a & 1
\end{array}\right]\left[\begin{array}{c}
-q_{i}\\
y_{i}
\end{array}\right]=\pm\left[\begin{array}{c}
q_{i}\\
-y_{i}
\end{array}\right] $$

But this implies that $-aq_{i}+y_{i}=-y_{i}$, and so $aq_{i}=2y_{i}$. This further implies that $q_{i}$ divides 2 which is a contradiction. 
\end{proof}

Secondly, we establish that if the Euler class of the manifold is non-zero, then there are no orientation-reversing actions:

\begin{prop} All finite, fiber-preserving actions on an orientable Seifert 3-manifold fibering over an orientable base space with nonzero Euler class are orientation-preserving.
\end{prop}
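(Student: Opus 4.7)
The plan is to reduce to the setting where Proposition 3.1 does not already apply and then to combine the local matrix equations across all boundary tori into a single global identity that forces $e=0$. First I would note that by Proposition 3.1 we may assume every critical fiber has multiplicity $q_i\le 2$, so the normalized Seifert invariants take the form $(g,o_1\mid (2,1),\ldots,(2,1),(1,b))$ and the Euler-class hypothesis becomes $b+n/2\neq 0$, where $n$ is the number of $(2,1)$ critical fibers.

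Next I would rerun the construction from the proof of Proposition 3.1 almost verbatim: choose a regular fiber $\gamma$ whose orbit under an orientation-reversing element $f=\varphi(g)$ has size $l$, adjust the invariants so the orbit data appear as $(1,b_1),\ldots,(1,b_l)$ with $\sum_{i=1}^{l}b_i=b$, and pass to $\hat{M}$ with fibering product structure $k_{\hat{M}}:S^1\times F\to \hat{M}$ together with solid-torus fillings $V_1,\ldots,V_{n+l}$. Applying Corollary 2.2 to the orientation-reversing restriction $\hat{f}$ on each pair of boundary tori $T_i\to T_j$ forces the matrix form $\pm\left[\begin{array}{cc}-1 & 0\\a_i & 1\end{array}\right]$, and repeating the fibration-preservation calculation yields the single equation $a_i q_i = 2y_i$ for the framing parameter $y_i$ on $T_i$. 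Because $q_i\in\{1,2\}$ this no longer produces a one-torus contradiction, so a global identity across all fillings is needed.

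The new ingredient is to sum these equations. Since $\hat{M}\cong S^1\times F$ and $\hat{f}$ is fiber-preserving and orientation-reversing, I would argue that up to isotopy $\hat{f}$ can be put in the normal form $(u,x)\mapsto(g(x)u^{\pm 1},h(x))$ for some $g:F\to S^1$ and $h\in Diff(F)$; the bottom-left entry $a_i$ of the boundary matrix is then, up to a sign consistent across all boundaries, the winding number of $g|_{\partial_i F}$. Because $g$ is defined on all of $F$, Stokes' theorem (equivalently, the vanishing of $g_{*}[\partial F]$ in $H_1(S^1)$) gives $\sum_i a_i = 0$. Substituting via $a_i q_i = 2y_i$ and identifying $y_i$ with $\pm p_i$ on the critical fibers and $\pm b_i$ on the added regular ones then converts this into $b+\sum_{j=1}^{n}p_j/q_j = \pm\tfrac{1}{2}\sum_i a_i = 0$, so $e=0$, contradicting the hypothesis.

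The main obstacle is establishing $\sum_i a_i=0$ rigorously within the paper's matrix formalism: one must verify that the outer sign in $\pm\left[\begin{array}{cc}-1 & 0\\a_i & 1\end{array}\right]$ can be chosen consistently across all boundary components (it can, because by connectedness of $F$ the diffeomorphism $\hat{f}$ is globally either fiber-reversing or base-reversing, not a mixture), and then reconcile the framing parameter $y_i$ with the Seifert parameters $p_i$ and $b_i$ so that $\tfrac{1}{2}\sum a_i$ is literally $-e$. The sign audit is bookkeeping, but it is precisely what converts the topological identity $\sum_i a_i=0$ into the numerical conclusion $e=0$.
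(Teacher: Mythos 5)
Your overall strategy---reduce via Proposition 3.1 to critical fibers of order at most $2$, run the boundary-torus matrix analysis, and close with a global vanishing statement coming from the product structure on $\hat{M}$---has the same shape as the paper's argument, whose global step is the relation $\alpha_{1}\cdots\alpha_{n+l}=1$ in $H_{1}(\hat{M})$, giving $\sum_{i}c_{i}=0$ for the fiber-twists $c_{i}$ of $\hat{f}$ measured in the product framing of $\hat{M}$. But your execution contains a genuine framing error. The entries $a_{i}$ in $\pm\left[\begin{smallmatrix}-1 & 0\\ a_{i} & 1\end{smallmatrix}\right]$ and the relation $a_{i}q_{i}=2y_{i}$ are computed in the framings $k_{\partial V_{i}}$ of the filling solid tori (that is where the fiber is a $(-q_{i},y_{i})$ curve), whereas the winding numbers of $g|_{\partial_{i}F}$ that your Stokes argument controls are twists measured in the product framing $k_{\hat{M}}$ on the tori $T_{i}$. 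These two framings differ by the gluing matrices $(d|_{\partial V_{i}})_{*}$, and it is exactly those matrices that carry the Seifert data. Concretely, in the reduced situation every nontrivially fibered filling is a $(-2,1)$ filling, and extension plus fibration-preservation force $a_{i}=1$ there and $a_{i}=0$ on every trivially fibered filling; hence $\sum_{i}a_{i}=n$, which is generally nonzero, so the identity $\sum_{i}a_{i}=0$ is false for the $a_{i}$ you defined. Relatedly, your identification $y_{i}=\pm b_{i}$ on the added regular fibers is wrong: those $V_{i}$ are trivially fibered in their own framing, so $y_{i}=0$; the obstruction data $b_{i}$ sit in $(d|_{\partial V_{i}})_{*}=\left[\begin{smallmatrix}-1 & b_{i}\\ 0 & 1\end{smallmatrix}\right]$, not in the $V_{i}$-side matrix, so the $b$-contribution to the Euler class can never appear in your sum.

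The repair is precisely the step you deferred as ``bookkeeping,'' and it is not optional: conjugate the $V_{i}$-side matrices by the gluing maps to obtain $(\hat{f}|_{T_{i}})_{*}$ in the $\hat{M}$ framing, read off $c_{i}=\mp1$ for the order-$2$ fillings and $c_{i}=\mp(b_{i}+b_{j(i)})$ for the trivial ones, and then the genuinely valid global relation $\sum_{i}c_{i}=0$ (your winding-number argument is a correct way to see this, since it is equivalent to the homological relation in $H_{1}(\hat{M})$) yields $0=\mp n\mp 2b=\pm 2e$, the desired contradiction. In short, the global vanishing you invoke is true only for twists measured in the $\hat{M}$ framing; once you transport your computation there, your argument becomes the paper's proof, while as written, with $a_{i}$ in the solid-torus framings, the key identity $\sum_{i}a_{i}=0$ fails and the obstruction term $b$ never enters.
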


\begin{proof} Again suppose for contradiction that there exists a periodic, fiber-preserving and orientation-reversing diffeomorphism $f:M\rightarrow M$. We proceed as in the previous proposition to yield a manifold $\hat{M}$ with fibering product structure $k_{\hat{M}}:S^{1}\times F\rightarrow\hat{M}$, a collection of solid tori $X$ with product structure $k_{X}:S^{1}\times(D_{1}\cup\ldots\cup D_{n+l})\rightarrow X$, and a restricted diffeomorphism $\hat{f}:\hat{M}\rightarrow\hat{M}$. 

We now consider the first homology group of $\hat{M}$. We have the presentation:

$$H_{1}(\hat{M})=\left\langle \alpha_{1},\ldots,\alpha_{n+l},a_{1},b_{1},\ldots,a_{g},b_{g},t|\alpha_{1}\cdots\alpha_{n+l}=1,\textrm{all commute}\right\rangle$$ 

Where $t$ represents an oriented fiber and $\alpha_{1},\ldots,\alpha_{n+l}$ represent positively oriented loops $k_{T_{i}}(\{u\}\times S^{1})$ on each boundary torus.

So we must have:

$$\hat{f}_{*}(\alpha_{i})=\alpha_{j(i)}^{\pm1}t^{c_{i}}$$

For some integer $c_{i}$ and some permutation $j\in perm\{1,\ldots n+l\}$. 

Here the sign is the same for each $\alpha_{i}$. So then:

$$1=\hat{f}_{*}(\alpha_{1}\cdots\alpha_{l})=t^{\sum_{i=1}^{n+l}c_{i}}$$

Hence, $$\sum _{i=1}^{n+l}c_{i}=0$$

\underline{Case 1:} There are no critical fibers. That is, $n=0$. 

Hence the obstruction is nonzero. We then consider the diagram:

$$\begin{array}{ccccc}
 &  & (d|_{\partial V_{i}})_{*}\\
 & H_{1}(T_{i}) & \leftarrow & H_{1}(\partial V_{i})\\
(\hat{f}|_{T_{i}})_{*} & \downarrow &  & \downarrow & (d|_{\partial V_{j(i)}}^{-1}\circ\hat{f}|_{T_{i}}\circ d|_{\partial V_{i}})_{*}\\
 & H_{1}(T_{j(i)}) & \leftarrow & H_{1}(\partial V_{j(i)})\\
 &  & (d|_{\partial V_{j(i)}})_{*}
\end{array}$$

So now $(d|_{\partial V_{i}})_{*}=\left[\begin{array}{cc}
-1 & b_{i}\\
0 & 1
\end{array}\right]_{k_{T_{i}}}^{k_{\partial V_{i}}} and (d|_{\partial V_{j(i)}}^{-1}\circ\hat{f}|_{T_{i}}\circ d|_{\partial V_{i}})_{*}=\pm\left[\begin{array}{cc}
1 & 0\\
0 & -1
\end{array}\right]_{k_{\partial V_{j(i)}}}^{k_{\partial V_{i}}}$ . This is as the diffeomorphism extends, is fiber-preserving, and orientation-reversing as well as each $V_{i}$ being trivially fibered. Here we again use Corollary 2.2.

Hence: $(\hat{f}|_{T_{i}})_{*}=\pm\left[\begin{array}{cc}
1 & -(b_{i}+b_{j(i)})\\
0 & -1
\end{array}\right]_{k_{T_{j(i)}}}^{k_{T_{i}}}$ . So then from above, $c_{i}=\mp(b_{i}+b_{j})$. Hence, we have:

$$\sum_{i=1}^{l}(b_{i}+b_{j(i)})=2\sum_{i=1}^{l}b_{i}=0$$

But by Theorem 1.1 of \cite{neumann1978seifert}, $\sum_{i=1}^{l}b_{i}$ is the obstruction term and by assumption is nonzero. Hence there can be no such $f$.

\underline{Case 2:} There are critical fibers.

Let the fillings of $T_{1},\ldots,T_{n}$ be by nontrivially fibered solid tori and the fillings of $T_{n+1},\ldots,T_{l}$ be by trivially fibered solid tori.

Firstly, for $T_{1},\ldots,T_{n}$ we have the diagram:

$$\begin{array}{ccccc}
 &  & (d|_{\partial V_{i}})_{*}\\
 & H_{1}(T_{i}) & \leftarrow & H_{1}(\partial V_{i})\\
(\hat{f}|_{T_{i}})_{*} & \downarrow &  & \downarrow & (d|_{\partial V_{j(i)}}^{-1}\circ\hat{f}|_{T_{i}}\circ d|_{\partial V_{i}})_{*}\\
 & H_{1}(T_{j(i)}) & \leftarrow & H_{1}(\partial V_{j(i)})\\
 &  & (d|_{\partial V_{j(i)}})_{*}
\end{array}$$

Now, in order for $(d|_{\partial V_{j(i)}}^{-1}\circ\hat{f}|_{T_{i}}\circ d|_{\partial V_{i}})_{*}$ to extend into the solid torus, preserve a nontrivial fibration, and be orientation-reversing, according to Corollary 2.2 we must have:

$$(d|_{\partial V_{j(i)}}^{-1}\circ\hat{f}|_{T_{i}}\circ d|_{\partial V_{i}})_{*}=\pm\left[\begin{array}{cc}
1 & 0\\
-1 & -1
\end{array}\right]_{k_{\partial V_{j(i)}}}^{k_{\partial V_{i}}} $$

As the fibration on both $V_{i}$ and $V_{j(i)}$ is a $(-2,1)$ fibration by Proposition 3.1. Hence, we have: 

$$(d|_{\partial V_{i}})_{*}=\left[\begin{array}{cc}
0 & 1\\
1 & 2
\end{array}\right]_{k_{T_{i}}}^{k_{\partial V_{i}}}$$ and $$(d|_{\partial V_{j(i)}})_{*}=\left[\begin{array}{cc}
0 & 1\\
1 & 2
\end{array}\right]_{k_{T_{i}}}^{k_{\partial V_{j(i)}}} $$

So that:

$$(\hat{f}|_{T_{i}})_{*}=\pm\left[\begin{array}{cc}
1 & -1\\
0 & -1
\end{array}\right]_{k_{T_{j(i)}}}^{k_{T_{i}}} $$

That is, for those $V_{i}$ that are nontrivially fibered, $c_{i}=\mp1$. Here again the sign is the same for all.

For $T_{n+1},\ldots,T_{l}$ we proceed as in Case 1, to yield $c_{i}=\mp(b_{i}+b_{j(i)})$ for $i=n+1,\ldots,l$. 

So now,

$$0=\sum_{i=1}^{n+l}c_{i}=\sum_{i=1}^{n}\mp1+\sum_{i=n+1}^{l}\mp(b_{i}+b_{j(i)})=\mp n\mp2b=\pm2e$$

This is twice the Euler class of the bundle which is nonzero. This yields our contradiction

\end{proof}

This proposition establishes the fact that there are no orientation-reversing actions on elliptic manifolds as these have nonzero Euler class.

\section{Manifolds fibering over $S^{2}$}
We apply the results of \cite{peet2018} in the case where the base space of the fibration on the Seifert manifold $M$ has underlying space $S^{2}$. Recall for an action $\varphi:G\rightarrow Diff^{fp}(M)$, there is an induced action $\varphi_{S^{2}}:G_{S^{2}}\rightarrow Diff(S^{2})$. We first consider these possible actions.

\subsection{Finite group actions on $S^{2}$}

By \cite{thurstongeometry}, the possible branching data of a quotient space of $S^{2}$ acted on by a finite group is given by Table 10.1.1. The semidirect product $\circ_{-}$ is defined so that for $H\circ_{-}\mathbb{\mathbb{Z}}_{2}$, the $\mathbb{Z}_{2}$ generator anti-commutes with each element of $H$. Indeed, throughout, this will be the only semidirect product used. If $H$ happens to be abelian, we use $Dih(H)$ instead.

The notation here is such that $rot_{n}^{x}$ is a rotation of order $n$ about the $x$-axis when $S^{2}$ is embedded about the origin in $\mathbb{R}^{3}$, similarly with $rot_{n}^{y},rot_{n}^{z}$. Then $ref^{xy}$ is a reflection in the $x-y$ plane, and again similarly with other reflections. Lastly $rot^{L_{1}},rot^{L_{2}},rot^{L_{3}}$ refer to rotations about lines regarding the rotational symmetry of a tetrahedron, an octahedron, and an icosahedron when inscribed inside $S^{2}$. For more details see \cite{kalliongis2018}. Note that the groups may be given by different names in other sources. For example, $A_{4}\circ_{-}\mathbb{Z}_{2}$ is really $S_{4}$, but we write as a semidirect product for convenience.

These groups form partially ordered sets. We do not expressly show these, but they can be worked out by referring to the generators given.

\begin{rem}
By reference to the generators, it is clear is that any finite group that acts on $S^{2}$ is a subgroup of a finite group that is a semidirect product of a group of orientation-preserving diffeomorphisms and a $\mathbb{Z}_{2}$ generated by an orientation-reversing element. Again, the semidirect product is such that the $\mathbb{Z}_{2}$ generator anti-commutes with all elements of the group of orientation-preserving diffeomorphisms.
\end{rem}

This leads us to consider which of these will satisfy the obstruction condition in Table 1:

\begin{center}
\begin{tabular}{|c|c|c|c|c|}
\hline 
{\footnotesize{}Number} & {\footnotesize{}Underlying Space} & {\footnotesize{}$G$ } & {\footnotesize{}Data} & {\footnotesize{}Generators}\tabularnewline
\hline 
{\footnotesize{}$\begin{array}{c}
1\\
2\\
3\\
4\\
5\\
6\\
7
\end{array}$ } & {\footnotesize{}$S^{2}$ } & {\footnotesize{}$\begin{array}{c}
Trivial\\
\mathbb{Z}_{n}\\
Dih(\mathbb{Z}_{2n})\\
Dih(\mathbb{Z}_{2n+1})\\
A_{4}\\
S_{4}\\
A_{5}
\end{array}$ } & {\footnotesize{}$\begin{array}{c}
()\\
(n,n)\\
(2,2,2n)\\
(2,2,2n+1)\\
(2,3,3)\\
(2,3,4)\\
(2,3,5)
\end{array}$ } & {\footnotesize{}$\begin{array}{c}
id\\
rot_{n}^{z}\\
rot_{2n}^{z},rot_{2}^{y}\\
rot_{2n+1}^{z},rot_{2}^{y}\\
rot_{2}^{z},rot_{3}^{L_{1}}\\
rot_{2}^{z},rot_{3}^{L_{2}}\\
rot_{2}^{z},rot_{3}^{L_{3}}
\end{array}$ }\tabularnewline
\hline 
{\footnotesize{}$\begin{array}{c}
8\\
9\\
10\\
11\\
12\\
13\\
14\\
15\\
16\\
17\\
18\\
19
\end{array}$ } & {\footnotesize{}$D$ } & {\footnotesize{}$\begin{array}{c}
\mathbb{Z}_{2}\\
\mathbb{Z}_{2n}\times\mathbb{Z}_{2}\\
\mathbb{Z}_{4n+2}\\
Dih(\mathbb{Z}_{n})\\
Dih(\mathbb{Z}_{2n})\circ_{-}\mathbb{Z}_{2}\\
Dih(\mathbb{Z}_{2n+1})\circ_{-}\mathbb{Z}_{2}\\
A_{4}\circ_{-}\mathbb{Z}_{2}\\
S_{4}\times\mathbb{Z}_{2}\\
A_{5}\times\mathbb{Z}_{2}\\
Dih(\mathbb{Z}_{2n})\circ_{-}\mathbb{Z}_{2}\\
Dih(\mathbb{Z}_{2n+1})\circ_{-}\mathbb{Z}_{2}\\
A_{4}\times\mathbb{Z}_{2}
\end{array}$ } & {\footnotesize{}$\begin{array}{c}
(;)\\
(2n;)\\
(2n+1;)\\
(;n,n)\\
(;2,2,2n)\\
(;2,2,2n+1)\\
(;2,3,3)\\
(;2,3,4)\\
(;2,3,5)\\
(2;2n)\\
(2;2n+1)\\
(3;2)
\end{array}$ } & {\footnotesize{}$\begin{array}{c}
ref^{xy}\\
rot_{2n}^{z},ref^{xy}\\
rot_{2n+1}^{z}\circ ref^{xy}\\
rot_{n}^{z},ref^{yz}\\
rot_{2n}^{z},rot_{2}^{y},ref^{yz}\\
rot_{2n+1}^{z},rot_{2}^{y},ref^{yz}\\
rot_{2}^{z},rot_{3}^{L_{1}},ref^{yz}\\
rot_{2}^{z},rot_{3}^{L_{2}},ref^{xy}\\
rot_{2}^{z},rot_{3}^{L_{3}},ref^{xy}\\
rot_{2n}^{z},rot_{2}^{y},ref^{xz}\\
rot_{2n+1}^{z},rot_{2}^{y},ref^{xz}\\
rot_{2}^{z},rot_{3}^{L_{1}},ref^{xy}
\end{array}$ }\tabularnewline
\hline 
{\footnotesize{}$\begin{array}{c}
20\\
21
\end{array}$ } & {\footnotesize{}$\mathbb{P}^{2}$ } & {\footnotesize{}$\begin{array}{c}
\mathbb{Z}_{2}\\
\mathbb{Z}_{2n}
\end{array}$ } & {\footnotesize{}$\begin{array}{c}
()\\
(n)
\end{array}$ } & {\footnotesize{}$\begin{array}{c}
rot_{2}^{z}\circ ref^{xy}\\
rot_{2n}^{z}\circ ref_{xy}
\end{array}$ }\tabularnewline
\hline 
\end{tabular}
\end{center}

\begin{center}
{Table 1: Orbit numbers of finite group actions
on $S^{2}$.}
\par\end{center}

\begin{center}
\begin{tabular}{|c|c|c|c|c|c|}
\hline 
{\footnotesize{}Number} & {\footnotesize{}$G$ } & {\footnotesize{}Orbit Numbers of non-regular points} & {\footnotesize{}$LCM$ } & {\footnotesize{}$|G|/LCM$ } & {\footnotesize{}OC Satisfied?}\tabularnewline
\hline 
{\footnotesize{}$\begin{array}{c}
1\\
2\\
3\\
4\\
5\\
6\\
7
\end{array}$ } & {\footnotesize{}$\begin{array}{c}
Trivial\\
\mathbb{Z}_{n}\\
Dih(\mathbb{Z}_{2n})\\
Dih(\mathbb{Z}_{2n+1})\\
A_{4}\\
S_{4}\\
A_{5}
\end{array}$ } & {\footnotesize{}$\begin{array}{c}
1\\
1,1\\
2,2n,2n\\
2,2n+1,2n+1\\
4,4,6\\
6,8,12\\
6,10,15
\end{array}$ } & {\footnotesize{}$\begin{array}{c}
1\\
n\\
2n\\
4n+2\\
6\\
12\\
30
\end{array}$ } & {\footnotesize{}$\begin{array}{c}
1\\
1\\
2\\
1\\
2\\
2\\
1
\end{array}$} & {\footnotesize{}$\begin{array}{c}
\textrm{all }b\\
\textrm{all }b\\
b\textrm{ even}\\
\textrm{all }b\\
b\textrm{ even}\\
b\textrm{ even}\\
\textrm{all }b
\end{array}$}\tabularnewline
\hline 
{\footnotesize{}$\begin{array}{c}
8\\
9\\
10\\
11\\
12\\
13\\
14\\
15\\
16\\
17\\
18\\
19
\end{array}$ } & {\footnotesize{}$\begin{array}{c}
\mathbb{Z}_{2}\\
\mathbb{Z}_{2n}\times\mathbb{Z}_{2}\\
\mathbb{Z}_{4n+2}\\
Dih(\mathbb{Z}_{n})\\
Dih(\mathbb{Z}_{2n})\circ_{-}\mathbb{Z}_{2}\\
Dih(\mathbb{Z}_{2n+1})\circ_{-}\mathbb{Z}_{2}\\
A_{4}\circ\mathbb{Z}_{2}\\
S_{4}\times\mathbb{Z}_{2}\\
A_{5}\times\mathbb{Z}_{2}\\
Dih(\mathbb{Z}_{2n})\circ_{-}\mathbb{Z}_{2}\\
Dih(\mathbb{Z}_{2n+1})\circ_{-}\mathbb{Z}_{2}\\
A_{4}\times\mathbb{Z}_{2}
\end{array}$ } & {\footnotesize{}$\begin{array}{c}
1\\
2,n\\
2,2n+1\\
1,1\\
2,2n,2n\\
2,2n+1,2n+1\\
4,4,6\\
6,8,12\\
6,10,15\\
2,4n\\
2,4n+2\\
6,8
\end{array}$ } & {\footnotesize{}$\begin{array}{c}
2\\
2n\\
4n+2\\
2n\\
4n\\
8n+4\\
12\\
24\\
60\\
4n\\
4n+2\\
12
\end{array}$ } & {\footnotesize{}$\begin{array}{c}
1\\
2\\
1\\
1\\
2\\
1\\
2\\
2\\
2\\
2\\
2\\
2
\end{array}$ } & {\footnotesize{}$\begin{array}{c}
\textrm{all }b\\
b\textrm{ even}\\
\textrm{all }b\\
\textrm{all }b\\
b\textrm{ even}\\
\textrm{all }b\\
b\textrm{ even}\\
b\textrm{ even}\\
b\textrm{ even}\\
b\textrm{ even}\\
b\textrm{ even}\\
b\textrm{ even}
\end{array}$ }\tabularnewline
\hline 
{\footnotesize{}$\begin{array}{c}
20\\
21
\end{array}$ } & {\footnotesize{}$\begin{array}{c}
\mathbb{Z}_{2}\\
\mathbb{Z}_{2n}
\end{array}$ } & {\footnotesize{}$\begin{array}{c}
-\\
2
\end{array}$ } & {\footnotesize{}$\begin{array}{c}
1\\
n
\end{array}$ } & {\footnotesize{}$\begin{array}{c}
2\\
2
\end{array}$ } & {\footnotesize{}$\begin{array}{c}
b\textrm{ even}\\
b\textrm{ even}
\end{array}$ }\tabularnewline
\hline 
\end{tabular}
{\footnotesize\par}
\par\end{center}

\begin{center}
{Table 2: Orbit numbers of finite group actions
on $S^{2}$.}
\par\end{center}

\begin{rem}

Note that for all actions with induced actions as above, the obstruction condition will be satisfied if the obstruction term is even, but there could actions that will not satisfy the obstruction condition if the obstruction term is odd. One such action is exhibited in \cite{peet2018}.
\end{rem}

\subsection{Manifolds fibering over $S^{2}$}
We now prove a general result that will set up the group structure for the groups acting on manifolds fibering over an orbifold with underlying space $S^{2}$. Throughout this section we assume normalized form for Seifert invariants.

\begin{prop} Let $M=(0,o_{1}|(q_{1},p_{1}),\ldots,(q_{n},p_{n}),(1,b))$ and $\varphi:G\rightarrow Diff_{+}^{fp}(M)$ be a finite action that satisfies the obstruction condition. Then $G$ is isomorphic to a subgroup of $(\mathbb{Z}_{m}\times G_{S^{2}+})\circ_{-}\mathbb{Z}_{2}$ for some $m\in\mathbb{N}$ and $G_{S^{2}+}$ is the orientation-preserving subgroup of the induced action $\varphi_{S^{2}}:G_{S^{2}}\rightarrow Diff(S^{2})$.
\end{prop}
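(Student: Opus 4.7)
The strategy is to apply Theorem 1.1 to reduce to a product action on a trivially fibered piece, separate the orientation-preserving-on-base subgroup, and then handle the orientation-reversing-on-base part using Remark 4.1.

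First, invoke Theorem 1.1: since $\varphi$ satisfies the obstruction condition, it is an extended product action. So on the trivially fibered submanifold there is a fibering product structure $k:S^{1}\times F\to \hat{M}$ relative to which the restricted action is a product action $\hat{\varphi}(g) = (\psi_{1}(g),\psi_{2}(g))$ with $\psi_{1}(g)\in \mathrm{Diff}(S^{1})$ and $\psi_{2}(g)\in \mathrm{Diff}(F)$. Because $\varphi$ is an injection, the combined map $(\psi_{1},\psi_{2}):G\to \mathrm{Diff}(S^{1})\times \mathrm{Diff}(F)$ is injective.

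Next, isolate $G_{+}:=$ the preimage in $G$ of $G_{S^{2}+}$ under the quotient to $G_{S^{2}}$. For every $g\in G_{+}$, $\psi_{2}(g)$ preserves the orientation of $F$; together with the hypothesis that $\varphi(g)$ preserves the orientation of $M$, this forces $\psi_{1}(g)$ to preserve the orientation of $S^{1}$, hence to lie in $SO(2)$. Therefore $\psi_{1}(G_{+})$ is a finite subgroup of $SO(2)\cong S^{1}$, which is necessarily cyclic of some order $m$. Using injectivity, the map $(\psi_{1},\psi_{2})$ embeds $G_{+}$ into $\mathbb{Z}_{m}\times G_{S^{2}+}$.

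Finally, extend to all of $G$. By Remark 4.1, $G_{S^{2}}$ sits inside $G_{S^{2}+}\circ_{-}\mathbb{Z}_{2}$, so $[G:G_{+}]\leq 2$. If the index is one, the embedding above gives $G\hookrightarrow \mathbb{Z}_{m}\times G_{S^{2}+}\leq (\mathbb{Z}_{m}\times G_{S^{2}+})\circ_{-}\mathbb{Z}_{2}$. If the index is two, choose $\tau\in G\setminus G_{+}$; then $\psi_{2}(\tau)$ reverses orientation on $F$, which forces $\psi_{1}(\tau)$ to be an orientation-reversing diffeomorphism of $S^{1}$, i.e.\ a reflection. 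Conjugation by a reflection inverts every rotation, so $\tau$ anti-commutes with the $\mathbb{Z}_{m}$ factor, and by Remark 4.1 the induced action of $\tau$ on the base anti-commutes (in the $\circ_{-}$ sense used in Table 1) with the elements of $G_{S^{2}+}$. Packaging these together presents $G$ as a subgroup of $(\mathbb{Z}_{m}\times G_{S^{2}+})\circ_{-}\mathbb{Z}_{2}$, as required.

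\textbf{Main obstacle.} The delicate step is the final one: matching the $\circ_{-}$ anti-commutation simultaneously on both factors. The $S^{1}$ side is elementary (any reflection inverts rotations), but on the $G_{S^{2}+}$ side one must appeal to the specific geometric form of the orientation-reversing generators listed in Table 1 rather than to a purely abstract inversion automorphism, since $G_{S^{2}+}$ need not be abelian. Care is also needed to confirm that the chosen $\tau$, together with the embedding of $G_{+}$ from the previous step, gives a well-defined embedding of the whole group (rather than merely of its two halves separately).
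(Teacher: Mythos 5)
Your proposal is correct and follows essentially the same route as the paper: use the obstruction condition to restrict to a product action on $S^{1}\times F$, observe that the $S^{1}$-direction image of the orientation-preserving-on-base part is cyclic ($\mathbb{Z}_{m}$), and use orientation-preservation of $\varphi$ to couple the fiber-reflection with the base-reversing elements into a single $\mathbb{Z}_{2}$ that anti-commutes in the $\circ_{-}$ sense. The "obstacle" you flag about anti-commutation on the $G_{S^{2}+}$ side is resolved in the paper exactly the way you propose, namely by appealing to the explicit generators behind Remark 1 / Table 1, so your argument matches the paper's proof up to repackaging (coset-by-coset versus taking the orientation-preserving subgroup of $(\mathbb{Z}_{m}\circ_{-}\mathbb{Z}_{2})\times(G_{S^{2}+}\circ_{-}\mathbb{Z}_{2})$).
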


\begin{proof}
As $\varphi:G\rightarrow Diff_{+}^{fp}(M)$ satisfies the obstruction condition, we can restrict to the action $\hat{\varphi}:G\rightarrow Diff(S^{1})\times Diff(F)$.

Now, consider $\hat{\varphi}_{S^{1}}(G)$, the projection onto $Diff(S^{1})$. So then $\hat{\varphi}_{S^{1}}(G)$ is a subgroup of $Dih(\mathbb{Z}_{m})\cong\mathbb{Z}_{m}\circ_{-}\mathbb{Z}_{2}$ for some $m$. 

Also, $\hat{\varphi}_{F}(G)$ the projection onto $Diff(F)$ will be a subgroup of $\hat{\varphi}_{F}(G)_{+}\circ_{-}\mathbb{Z}_{2}$ by the remark above, where $\hat{\varphi}_{F}^{+}(G)_{+}$ is the orientation-preserving subgroup. 

So now, $\hat{\varphi}(G)\subset\hat{\varphi}_{S^{1}}(G)\times\hat{\varphi}_{F}(G)\subset(\mathbb{Z}_{m}\circ_{-}\mathbb{Z}_{2})\times(\hat{\varphi}_{F}(G)_{+}\circ_{-}\mathbb{Z}_{2})$

But, $\hat{\varphi}(G)$ is orientation-preserving. Hence, we consider the orientation-preserving subgroup of $(\mathbb{Z}_{m}\circ_{-}\mathbb{Z}_{2})\times(\hat{\varphi}_{F}(G)_{+}\circ_{-}\mathbb{Z}_{2})$.

Note that $g\in((\mathbb{Z}_{m}\circ_{-}\mathbb{Z}_{2})\times(\hat{\varphi}_{F}(G)\circ_{-}\mathbb{Z}_{2}))_{+}$ if and only if $g=(g_{1},g_{2})$ or $g=(g_{1}z_{1},g_{2}z_{2})$ for $(g_{1},g_{2})\in\mathbb{Z}_{m}\times\hat{\varphi}_{F}(G)_{+}$ and $z_{1},z_{2}$ are respective generators of the two $\mathbb{Z}_{2}$ components. It therefore follows that $((\mathbb{Z}_{m}\circ_{-}\mathbb{Z}_{2})\times(\hat{\varphi}_{F}(G)_{+}\circ_{-}\mathbb{Z}_{2}))_{+}=(\mathbb{Z}_{m}\times\hat{\varphi}_{F}(G)_{+})\circ_{-}\mathbb{Z}_{2}$ where the $\mathbb{Z}_{2}$ is generated by $z=(z_{1},z_{2})$, and the semidirect product is defined by $z(g_{1},g_{2})z^{-1}=(g_{1}^{-1},g_{2}^{-1})$.

Now, $\hat{\varphi}_{F}(G)_{+}\cong G_{S^{2}+}$ and $\hat{\varphi}(G)\cong G$ so that $G\subset(\mathbb{Z}_{m}\times G_{S^{2}+})\circ_{-}\mathbb{Z}_{2}$.

\end{proof}

This result essentially states that we need only check that the obstruction condition is satisfied and calculate the possible orientation-preserving subgroup of the induced action $\varphi_{S^{2}}:G_{S^{2}}\rightarrow Diff(S^{2})$. This we can do by reference to the Tables 1 and 2.

We now proceed to consider the individual cases for the number of critical fibers. For each proof the construction set out in \cite{peet2018} provides the converse.

\subsection{One critical fiber}

We now consider the case where there is only one critical fiber.

\begin{cor} Let $M=(0,o_{1}|(q,p),(1,b))$. There exists a finite action $\varphi:G\rightarrow Diff_{+}^{fp}(M)$ if and only if $G$ is isomorphic to a subgroup of $Dih(\mathbb{Z}_{m}\times\mathbb{Z}_{n})$ for some $m,n\in\mathbb{N}$. 
\end{cor}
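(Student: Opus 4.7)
The plan is to apply Proposition 4.1 and then exploit the hypothesis of a \emph{single} critical fiber to pin down the induced action on the base $S^{2}$.

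First, since $\varphi:G\rightarrow Diff_{+}^{fp}(M)$ must satisfy the obstruction condition (the sole orbit of critical fibers has orbit number $1$, and any regular orbit can be handled as in the preceding propositions), Proposition 4.1 gives $G$ as a subgroup of $(\mathbb{Z}_{m}\times G_{S^{2}+})\circ_{-}\mathbb{Z}_{2}$, where $G_{S^{2}+}$ is the orientation-preserving subgroup of the induced action $\varphi_{S^{2}}:G_{S^{2}}\rightarrow Diff(S^{2})$. The integer $m$ comes from $\hat{\varphi}_{S^{1}}(G)\subseteq Dih(\mathbb{Z}_{m})$ as in the proof of 4.1.

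Next I would argue that $G_{S^{2}+}$ is cyclic. Since $\varphi$ is fiber-preserving, the critical fiber of order $q$ is sent to a critical fiber of the same order; as there is only one such fiber, it is setwise fixed. The induced action $\varphi_{S^{2}}$ therefore fixes the image point $x\in S^{2}$ of this critical fiber. By passing to the linearized action on $T_{x}S^{2}$ (or equivalently invoking Kerékjártó/Brouwer for finite smooth actions on $S^{2}$ fixing a point), any such finite subgroup of $Diff(S^{2})$ is conjugate to a subgroup of $O(2)$, and its orientation-preserving part is a subgroup of $SO(2)$, hence cyclic: $G_{S^{2}+}\cong\mathbb{Z}_{n}$ for some $n\in\mathbb{N}$. (Examining Table 1 one sees that among the listed groups only $\mathbb{Z}_{n}$ contains a global fixed point among its orientation-preserving elements, confirming this.)

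Substituting into the conclusion of Proposition 4.1 gives
$$G \subseteq (\mathbb{Z}_{m}\times\mathbb{Z}_{n})\circ_{-}\mathbb{Z}_{2},$$
where the $\mathbb{Z}_{2}$ acts by inversion on $\mathbb{Z}_{m}\times\mathbb{Z}_{n}$. By definition of the dihedralization notation, this is precisely $Dih(\mathbb{Z}_{m}\times\mathbb{Z}_{n})$, completing the ``only if'' direction.

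For the converse, as noted in the paper, the general construction of \cite{peet2018} produces, for any subgroup of $Dih(\mathbb{Z}_{m}\times\mathbb{Z}_{n})$, a finite fiber- and orientation-preserving action on a manifold of the form $(0,o_{1}|(q,p),(1,b))$: take the product action of $\mathbb{Z}_{m}$ (rotation along the fiber) and $\mathbb{Z}_{n}$ (rotation of $S^{2}$ fixing the two axis points carrying the exceptional and a regular fiber) on a trivially fibered $S^{1}\times D^{2}$ complement of the critical fiber, throw in the orientation-reversing extension furnished by the $\mathbb{Z}_{2}$ factor acting on both coordinates simultaneously, and extend across the Dehn filling. The main obstacle in writing this out is verifying that the $\mathbb{Z}_{2}$ ``flip'' extends compatibly across the exceptional filling, but this is exactly the kind of check already carried out in \cite{peet2018} and hence may be cited rather than redone.
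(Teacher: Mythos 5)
Your proof is correct and follows essentially the same route as the paper: the unique critical fiber forces the induced $S^{2}$-action to fix a point (so the obstruction condition holds for any $b$ and $G_{S^{2}+}$ is cyclic), then Proposition 4.1 gives $G\subseteq(\mathbb{Z}_{m}\times\mathbb{Z}_{n})\circ_{-}\mathbb{Z}_{2}=Dih(\mathbb{Z}_{m}\times\mathbb{Z}_{n})$, with the converse supplied by the construction of \cite{peet2018}. The only cosmetic difference is that you justify the cyclicity of $G_{S^{2}+}$ by linearizing at the fixed point, whereas the paper reads it off as action 11 in Tables 1 and 2.
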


\begin{proof}
Note that the induced action $\varphi_{S^{2}}:G_{S^{2}}\rightarrow Diff(S^{2})$ must fix a point. By Tables 1 and 2, we can assume that this is of the form of action 11. This action satisfies the obstruction condition for any $b$. Hence, $G_{S^{2}+}\cong\mathbb{Z}_{n}$ for some $n\in\mathbb{N}$. Then by Proposition 4.1, $G$ is isomorphic to a subgroup of $(\mathbb{Z}_{m}\times\mathbb{Z}_{n})\circ_{-}\mathbb{Z}_{2}=Dih(\mathbb{Z}_{m}\times\mathbb{Z}_{n})$.  

\end{proof}

\subsection{Two critical fibers}

Now consider two critical fibers. Firstly, when the respective normalized fillings are not equal.

\begin{cor} Let $M=(0,o_{1}|(q_{1},p_{1}),(q_{2},p_{2}),(1,b))$ with $(q_{1},p_{1})\neq(q_{2},p_{2})$. There exists a finite action $\varphi:G\rightarrow Diff_{+}^{fp}(M)$ if and only if $G$ is isomorphic to a subgroup of $Dih(\mathbb{Z}_{m}\times\mathbb{Z}_{n})$ for some $m,n\in\mathbb{N}$. 
\end{cor}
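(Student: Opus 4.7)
The strategy is a close parallel of the proof of Corollary 4.2, with one additional observation at the outset. Since $(q_{1},p_{1})\neq(q_{2},p_{2})$, no fiber-preserving diffeomorphism can interchange the two critical fibers: each $\varphi(g)$ must carry a critical fiber to another critical fiber with identical normalized Seifert invariants. Hence the projections of the two critical fibers to $S^{2}$ are each individually fixed under the induced action $\varphi_{S^{2}}:G_{S^{2}}\to Diff(S^{2})$, not merely preserved as a set.

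Next, I would extract the orientation-preserving subgroup $G_{S^{2}+}$, which is a finite subgroup of $SO(3)$ admitting two distinct common fixed points. Every nontrivial orientation-preserving isometry of $S^{2}$ has exactly the two poles of its rotation axis as fixed points, so each element of $G_{S^{2}+}$ must be a rotation about the common axis through the two critical points. This forces $G_{S^{2}+}\cong\mathbb{Z}_{n}$ for some $n\in\mathbb{N}$. Inspecting Tables 1 and 2, the maximal induced action on $S^{2}$ whose orientation-preserving part is cyclic and which fixes two antipodal points individually is action 11, namely $Dih(\mathbb{Z}_{n})$, whose orientation-reversing elements are reflections in planes through the rotation axis; by the table, this action satisfies the obstruction condition for every $b$.

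Applying Proposition 4.1 with $G_{S^{2}+}\cong\mathbb{Z}_{n}$ then yields $G\subset(\mathbb{Z}_{m}\times\mathbb{Z}_{n})\circ_{-}\mathbb{Z}_{2}=Dih(\mathbb{Z}_{m}\times\mathbb{Z}_{n})$ for some $m\in\mathbb{N}$, giving the forward direction. For the converse, the construction of \cite{peet2018} realizes any such subgroup as an extended product action on $M$, as indicated in the remark preceding this subsection.

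The most substantive step is the first one, isolating the fact that distinct Seifert invariants prevent the two critical fibers from being swapped. After that, the argument is essentially identical to Corollary 4.2, and reduces to a bookkeeping check against Tables 1 and 2 followed by a direct application of Proposition 4.1.
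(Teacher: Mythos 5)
Your proof is correct and takes essentially the same route as the paper: observe that the induced action $\varphi_{S^{2}}$ must fix the two critical points, identify it (via Tables 1 and 2) with action 11 so that $G_{S^{2}+}\cong\mathbb{Z}_{n}$ and the obstruction condition holds for all $b$, and then apply Proposition 4.1, with the construction of \cite{peet2018} giving the converse. Your explicit justification that distinct normalized invariants prevent the two critical fibers from being interchanged is a detail the paper leaves implicit, but it does not change the argument.
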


\begin{proof}
Note that the induced action $\varphi_{S^{2}}:G_{S^{2}}\rightarrow Diff(S^{2})$ must fix two points. By Tables 1 and 2, we again assume the form of action 11. This action satisfies the obstruction condition for any $b$. Hence, $G_{S^{2}+}\cong\mathbb{Z}_{n}$ for some $n\in\mathbb{N}$. Then by Proposition 4.1, $G$ is isomorphic to a subgroup of $(\mathbb{Z}_{m}\times\mathbb{Z}_{n})\circ_{-}\mathbb{Z}_{2}=Dih(\mathbb{Z}_{m}\times\mathbb{Z}_{n})$.  
\end{proof}

Now consider when the fillings of the two critical fibers are equal.

\begin{cor} Let $M=(0,o_{1}|(q,p),(q,p),(1,b))$ with $b$ even. There exists a finite action $\varphi:G\rightarrow Diff_{+}^{fp}(M)$ if and only if $G$ is isomorphic to a subgroup of $(\mathbb{Z}_{m}\times Dih(\mathbb{Z}_{n}))\circ_{-}\mathbb{Z}_{2}$  for some $m,n\in\mathbb{N}$. 
\end{cor}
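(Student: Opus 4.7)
The strategy follows the same two-step pattern as Corollaries 4.2 and 4.3: analyze the induced action $\varphi_{S^{2}}:G_{S^{2}}\rightarrow Diff(S^{2})$ to pin down $G_{S^{2}+}$, then apply Proposition 4.1 to classify $G$. The new ingredient here is that the two critical fibers share identical invariants $(q,p)$ and can now be exchanged by a fiber-preserving diffeomorphism, enlarging the class of permissible induced actions on $S^{2}$ relative to Corollaries 4.2 and 4.3.

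For the forward direction, I would first observe that any fiber-preserving $\varphi(g)$ sends critical fibers to critical fibers with the same invariants, so $\varphi_{S^{2}}$ preserves the 2-element subset $\{x_{1},x_{2}\}\subset S^{2}$ of base-space images of the two critical fibers. Hence $G_{S^{2}+}$ is a finite subgroup of the orientation-preserving diffeomorphism group of $S^{2}$ that preserves $\{x_{1},x_{2}\}$. Scanning the orientation-preserving entries in Tables 1 and 2, the only such actions are items 2, 3, and 4, or their subgroups: cyclic rotations fixing both points, or dihedral rotations exchanging them. The platonic actions $A_{4}$, $S_{4}$, $A_{5}$ (items 5, 6, 7) are excluded since they admit no 2-element orbit. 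Therefore $G_{S^{2}+}$ is isomorphic to a subgroup of $Dih(\mathbb{Z}_{n})$ for some $n$, and Proposition 4.1 then yields $G$ isomorphic to a subgroup of $(\mathbb{Z}_{m}\times Dih(\mathbb{Z}_{n}))\circ_{-}\mathbb{Z}_{2}$.

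For the converse direction, I would invoke the construction of \cite{peet2018} to realize any such subgroup as an extended product action on $M$. The hypothesis that $b$ is even is essential: by item 3 of Table 2, when $G_{S^{2}+}$ contains $Dih(\mathbb{Z}_{2n})$, the obstruction condition requires $b$ to be even. With $b$ even, every admissible $G_{S^{2}+}$ satisfies the obstruction condition, so the construction applies uniformly. The main obstacle I anticipate is verifying that no orientation-preserving finite group outside the cyclic/dihedral families can preserve $\{x_{1},x_{2}\}$; but this amounts to the classical fact that finite subgroups of $SO(3)$ fixing or swapping a pair of antipodal points are cyclic or dihedral, which is precisely what Tables 1 and 2 confirm by orbit-size inspection.
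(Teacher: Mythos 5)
Your proposal is correct and follows essentially the same route as the paper: the induced action on $S^{2}$ must preserve the two points under the critical fibers, the tables then force $G_{S^{2}+}$ to be cyclic or dihedral (the paper phrases this as the full induced action being of type 12/13 or 17/18, handling the non-exchanging case via Corollary 4.3), and Proposition 4.1 together with the construction of \cite{peet2018} finishes both directions. One small repair: the evenness of $b$ is needed in the \emph{forward} direction, since the obstruction condition is a hypothesis of Proposition 4.1 and is what Table 2's ``OC satisfied'' column certifies for these induced actions — you state the relevant fact but attach it only to the converse.
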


\begin{proof}
 We assume that the induced action $\varphi_{S^{2}}:G_{S^{2}}\rightarrow Diff(S^{2})$ exchanges two points referring to the critical fibers. Otherwise, we apply Corollary 4.3. Given that two points are exchanged, we consult the Tables to note that we can assume that $\varphi_{S^{2}}$ is in the form of actions 12/13 or 17/18. The obstruction condition will be satisfied for each of these as we assume that $b$ is even. Then in either case, $G_{S^{2}+}\cong Dih(\mathbb{Z}_{n})$ and by Proposition 4.1, $G$ is isomorphic to a subgroup of $(\mathbb{Z}_{m}\times G_{S^{2}+})\circ_{-}\mathbb{Z}_{2}\cong(\mathbb{Z}_{m}\times Dih(\mathbb{Z}_{n}))\circ_{-}\mathbb{Z}_{2}$.  
\end{proof}

\begin{rem}
Note that $M=(0,o_{1}|(q,p),(q,p),(1,b))$ with $b$ even is simply $S^{2}\times S^{1}$.  $M=(0,o_{1}|(q,p),(q,p),(1,b))$ with $b$ odd is a Lens space and so as an exception to our results, we refer the reader to \cite{kalliongis2002geometric} for a classification of finite actions on these manifolds.
\end{rem}

\subsection{Three critical fibers}

We now move on to having three critical fibers and break into the three possible scenarios: that they all have different fillings; that two have the same fillings; and that they all have the same filling.

\begin{cor} Let $M=(0,o_{1}|(q_{1},p_{1}),(q_{2},p_{2}),(q_{3},p_{3}),(1,b))$ with $(q_{i},p_{i})$ all different. There exists a finite action $\varphi:G\rightarrow Diff_{+}^{fp}(M)$ if and only if $G$ is isomorphic to a subgroup of $Dih(\mathbb{Z}_{m})$ for some $m\in\mathbb{N}$.
\end{cor}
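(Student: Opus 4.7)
The plan is to follow the same template as Corollaries 4.3--4.5, namely to read off the possible induced $S^{2}$ actions from Tables 1 and 2 and then apply Proposition 4.1. First I would note that since the three Seifert invariants $(q_{i},p_{i})$ are pairwise distinct, any fiber-preserving action cannot permute the three critical fibers among themselves, so the three corresponding special points on $S^{2}$ must each be individually fixed by the induced action $\varphi_{S^{2}}:G_{S^{2}}\rightarrow Diff(S^{2})$.

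Next I would argue that this forces the orientation-preserving subgroup $G_{S^{2}+}$ to be trivial. The key observation is that any nontrivial orientation-preserving self-diffeomorphism of $S^{2}$ is, after conjugation, a nontrivial rotation in $SO(3)$, and such a rotation fixes exactly two points; hence an orientation-preserving diffeomorphism that fixes three distinct points must be the identity. This can also be read directly from Tables 1 and 2: scanning the orientation-preserving actions (entries 1--7), only the trivial action has at least three fixed points. Consequently $G_{S^{2}+}$ is trivial, and by Proposition 4.1 we get $G\subset(\mathbb{Z}_{m}\times\{1\})\circ_{-}\mathbb{Z}_{2}=Dih(\mathbb{Z}_{m})$ for some $m\in\mathbb{N}$.

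For the converse, I would invoke the construction from \cite{peet2018} to realise any subgroup of $Dih(\mathbb{Z}_{m})$ as such an action, provided the obstruction condition is satisfied. Here the induced $S^{2}$-action is either trivial (entry 1) or a single reflection in a plane containing the three special points (entry 8), both of which satisfy the obstruction condition for every $b$ by Tables 1 and 2. So the construction goes through for all $b$ without additional parity constraints.

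I do not expect a real obstacle: the three fixed points on $S^{2}$ force the orientation-preserving part to collapse, and Proposition 4.1 does the rest. The only mildly delicate point is verifying carefully that, once the three points are fixed, an orientation-reversing generator must indeed act as a reflection whose fixed circle passes through all three points, so that the whole group sits inside $Dih(\mathbb{Z}_{m})$ rather than some larger semidirect product. This follows from the fact that an orientation-reversing involution of $S^{2}$ with at least three fixed points is conjugate to $ref^{xy}$ (entry 8), and any further orientation-reversing element of $G_{S^{2}}$ differs from this one by an element of $G_{S^{2}+}=\{1\}$.
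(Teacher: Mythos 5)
Your argument is correct and follows essentially the same route as the paper: read off from Tables 1 and 2 that the induced action on $S^{2}$ must fix the three special points (since the distinct normalized invariants prevent the critical fibers from being permuted), conclude that $G_{S^{2}+}$ is trivial, apply Proposition 4.1 to get $G\subset\mathbb{Z}_{m}\circ_{-}\mathbb{Z}_{2}=Dih(\mathbb{Z}_{m})$, and obtain the converse from the construction of \cite{peet2018}. Your write-up is in fact slightly more careful than the paper's, which asserts that the only possible induced action is the trivial one, whereas you correctly note that an orientation-reversing reflection fixing the three points (action 8) is also possible; this refinement does not affect the conclusion, since Proposition 4.1 only involves the orientation-preserving part $G_{S^{2}+}$.
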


\begin{proof}
Note that the induced action $\varphi_{S^{2}}:G_{S^{2}}\rightarrow Diff(S^{2})$ must fix three points. By Tables 1 and 2, the only such induced action is the trivial action 1, that is $G_{S^{2}}$ is the trivial group. This action trivially satisfies the obstruction condition for any $b$. 

Hence, by Proposition 4.1, $G$ is a subgroup of $(\mathbb{Z}_{m}\times G_{S^{2}+})\circ_{-}\mathbb{Z}_{2}\cong\mathbb{Z}_{m}\circ_{-}\mathbb{Z}_{2}=Dih(\mathbb{Z}_{m})$. 
  
\end{proof}

\begin{cor} Let $M=(0,o_{1}|(q_{1},p_{1}),(q,p),(q,p),(1,b))$ with $(q_{1},p_{1})\neq(q,p)$. There exists a finite action $\varphi:G\rightarrow Diff_{+}^{fp}(M)$ if and only if $G$ is isomorphic to a subgroup of $Dih(\mathbb{Z}_{m}\times\mathbb{Z}_{2})$ for some $m\in\mathbb{N}$.
\end{cor}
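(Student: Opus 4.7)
The plan is to follow the strategy of Corollaries 4.3--4.7: the reverse direction comes immediately from the construction in \cite{peet2018} applied to any subgroup of the target group $Dih(\mathbb{Z}_m\times\mathbb{Z}_2)$, while the forward direction is handled by analyzing the induced action $\varphi_{S^2}:G_{S^2}\rightarrow Diff(S^2)$ and then invoking Proposition 4.1.

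For the forward direction, let $A$ denote the point of $S^2$ corresponding to the $(q_1,p_1)$ critical fiber, and $B,C$ the points corresponding to the two $(q,p)$ fibers. Since $(q_1,p_1)\neq (q,p)$, the filling $(q_1,p_1)$ is unique among the critical data, forcing $\varphi_{S^2}$ to fix $A$, while $\{B,C\}$ is preserved setwise. I would then split into two cases. First, if $B$ and $C$ are each fixed individually, all three critical points are fixed, and as in the proof of Corollary 4.6 the only finite orientation-preserving diffeomorphism of $S^{2}$ fixing three distinct points is the identity; hence $G_{S^{2}+}$ is trivial, and Proposition 4.1 gives $G\subset \mathbb{Z}_m\circ_{-}\mathbb{Z}_2 = Dih(\mathbb{Z}_m)\subset Dih(\mathbb{Z}_m\times\mathbb{Z}_2)$.

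Second, if $B$ and $C$ are exchanged by some element, then $G_{S^{2}+}$ fixes $A$ and preserves $\{B,C\}$ setwise; any finite orientation-preserving subgroup of $Diff(S^2)$ fixing $A$ consists of rotations about the axis through $A$, and the constraint that the orbit of $B$ has size at most $2$ forces $|G_{S^{2}+}|\le 2$. Consulting Tables 1 and 2 for induced actions possessing both a fixed point and an invariant pair of points, the only viable entries are action 2 with $n=2$, action 8, and action 11 with $n=2$; in each the obstruction condition is satisfied for every $b$. Proposition 4.1 then yields $G\subset (\mathbb{Z}_m\times G_{S^{2}+})\circ_{-}\mathbb{Z}_2 \subset Dih(\mathbb{Z}_m\times \mathbb{Z}_2)$.

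The main obstacle I anticipate is the careful verification, via Tables 1 and 2, that no other induced action is compatible with the required orbit data $\{1,2\}$ on the three critical points. This amounts to ruling out the dihedral/polyhedral actions and the antipodal/rotary-reflection actions on $S^2$ (none of which admit a global fixed point), together with cyclic actions of order greater than two (which would enlarge the orbit of $B$ beyond $\{B,C\}$); once these are excluded, the three listed candidates are exhaustive and the argument closes.
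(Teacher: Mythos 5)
Your proposal is correct and follows essentially the same route as the paper: constrain the induced action on $S^{2}$ by the fact that the $(q_{1},p_{1})$ point is fixed and the two $(q,p)$ points are at most exchanged, identify the possibilities (up to subgroups, action 11 with $n=2$) from Tables 1 and 2, note the obstruction condition holds for all $b$, conclude $G_{S^{2}+}\subset\mathbb{Z}_{2}$, and apply Proposition 4.1, with the converse supplied by the construction of \cite{peet2018}. Your extra case split and the rotation-axis argument are just an explicit geometric verification of what the paper reads directly off the tables (and your internal reference should be to the three-distinct-fibers corollary rather than to the statement being proved), but the substance matches.
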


\begin{proof}
Note that the induced action $\varphi_{S^{2}}:G_{S^{2}}\rightarrow Diff(S^{2})$ must fix a point and at most exchange two others. By Tables 1 and 2, the only such action is of the form of action 11 with $n=2$. This action satisfies the obstruction condition for any $b$. So $G_{S^{2}+}\cong\mathbb{Z}_{2}$. Hence by Proposition 4.1, $G$ is a subgroup of $(\mathbb{Z}_{m}\times\mathbb{Z}_{2})\circ_{-}\mathbb{Z}_{2}=Dih(\mathbb{Z}_{m}\times\mathbb{Z}_{2})$. 
\end{proof}

\begin{cor} Let $M=(0,o_{1}|(q,p),(q,p),(q,p),(1,b))$. There exists a finite action $\varphi:G\rightarrow Diff_{+}^{fp}(M)$ if and only if $G$ is isomorphic to a subgroup of $(\mathbb{Z}_{m}\times Dih(\mathbb{Z}_{3}))\circ_{-}\mathbb{Z}_{2}$.
\end{cor}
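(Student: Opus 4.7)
The plan is to follow the template of the preceding corollaries in this section: identify which of the induced actions $\varphi_{S^{2}}:G_{S^{2}}\rightarrow Diff(S^{2})$ from Tables 1 and 2 are compatible with the configuration of critical fibers, extract the orientation-preserving subgroup $G_{S^{2}+}$, and then feed it into Proposition 4.1. Since all three critical fibers share the same normalized filling $(q,p)$, the only constraint on $\varphi_{S^{2}}$ is that it preserve the three-point set they determine; it may permute them freely.

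First I would scan Tables 1 and 2 for actions whose non-regular orbits can be partitioned so as to account for exactly three marked points. The platonic families $A_{4}$, $S_{4}$, $A_{5}$ and their $\mathbb{Z}_{2}$-extensions are ruled out immediately, since their non-regular orbit sizes $(4,4,6)$, $(6,8,12)$, $(6,10,15)$ never sum to three over any subcollection of orbits. The largest surviving candidate is $Dih(\mathbb{Z}_{3})$, arising from action 4 with $n=1$ and admitting a size-$3$ orbit; its extensions by an orientation-reversing $\mathbb{Z}_{2}$ also appear as rows 13 and 18 with $n=1$. In every case the orientation-preserving subgroup is $G_{S^{2}+}\cong Dih(\mathbb{Z}_{3})$, and the Tables confirm that the obstruction condition is satisfied for all values of $b$. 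The sub-cases where the three critical fibers are fixed or split as $\{1,2\}$ produce smaller $G_{S^{2}+}$, but the resulting groups already sit inside the target group and so require no separate treatment.

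Applying Proposition 4.1 then gives $G\subset(\mathbb{Z}_{m}\times G_{S^{2}+})\circ_{-}\mathbb{Z}_{2}\cong(\mathbb{Z}_{m}\times Dih(\mathbb{Z}_{3}))\circ_{-}\mathbb{Z}_{2}$, and the converse direction is furnished by the construction of \cite{peet2018}. The step I expect to be most delicate is the orbit-structure bookkeeping: a subgroup of one of the platonic groups (for instance $\mathbb{Z}_{3}\subset A_{4}$) could in principle realize a size-$3$ orbit on $S^{2}$, and one must be sure such a subgroup appears in the Tables under a previously classified row rather than needing a new one. Because Tables 1 and 2 already enumerate finite group actions on $S^{2}$ up to conjugacy, this is automatic, and the corresponding $G_{S^{2}+}$ is a subgroup of $Dih(\mathbb{Z}_{3})$, which preserves the bound.
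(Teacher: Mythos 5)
Your proposal is correct and follows essentially the same route as the paper: reduce the degenerate configurations (all three points fixed, or a $1{+}2$ split) to the earlier corollaries, identify the remaining induced action on $S^{2}$ from Tables 1 and 2 as the dihedral one permuting the three points, read off $G_{S^{2}+}\cong Dih(\mathbb{Z}_{3})$ with the obstruction condition holding for all $b$, and apply Proposition 4.1, with the construction of \cite{peet2018} giving the converse. One small inaccuracy: the paper works with action 13 with $n=1$, and your inclusion of row 18 with $n=1$ as a further candidate is off --- that group ($D_{3d}$, with non-regular orbit sizes $2$ and $4n+2=6$ and all other orbits of size $12$) admits no invariant three-point set, and moreover Table 2 lists its obstruction condition as ``$b$ even,'' so your blanket claim that the tables give ``all $b$'' in every case would be wrong if that row actually arose; since it cannot arise, and its orientation-preserving subgroup is in any case $Dih(\mathbb{Z}_{3})$, the slip does not affect the conclusion.
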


\begin{proof}
We assume that $\varphi_{S^{2}}:G_{S^{2}}\rightarrow Diff(S^{2})$ exchanges three points, else apply Corollary 4.5. or Corollary 4.6. So now by Tables 1 and 2 we can assume that $\varphi_{S^{2}}:G_{S^{2}}\rightarrow Diff(S^{2})$ is of the form of action 13 with $n=1$. This action satisfies the obstruction condition for any $b$ and $G_{S^{2}+}\cong Dih(\mathbb{Z}_{3})$. Hence, by Proposition 4.1, $G$ is isomorphic to a subgroup of $(\mathbb{Z}_{m}\times Dih(\mathbb{Z}_{3}))\circ_{-}\mathbb{Z}_{2}$.  
\end{proof}

\subsection{No critical fibers}

In the case where there are no critical fibers, we note that there are no restrictions on $\varphi_{S^{2}}:G_{S^{2}}\rightarrow Diff(S^{2})$. Hence we cannot guarantee that the obstruction condition will be satisfied unless $b$ is even. In such a case the group will be a subgroup of a group of the form $(\mathbb{Z}_{m}\times H)\circ_{-}\mathbb{Z}_{2}$ where $H$ is a group from the list of groups that act orientation-preservingly on $S^{2}$. Note, however that once again, these manifolds are Lens spaces of the form $L(b,1)$ and we again refer the reader to \cite{kalliongis2002geometric}.

\subsection{Manifolds fibering over $\mathbb{P}^{2}$}

We here apply the results of \cite{peet2018finite} to yield the following result: 

\begin{cor} Let $M=(1,n_{2}|(q,p),(1,b))$. There exists a finite action $\varphi:G\rightarrow Diff_{+}^{fp}(M)$ if and only if $G$ is isomorphic to a subgroup of $\mathbb{Z}_{2}\times Dih(\mathbb{Z}_{n})$ for some $n\in\mathbb{N}$.
\end{cor}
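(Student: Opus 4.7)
The plan is to reduce to an action on the orientable double cover and then invoke Corollary 4.4. First I would identify the double cover $\tilde{M}$ of $M=(1,n_{2}|(q,p),(1,b))$. Since the base orbifold has underlying space $\mathbb{P}^{2}$ with a single cone point of order $q$, its orientation double cover is $S^{2}$ with two cone points of order $q$ (the two preimages of the single cone point). Because the Euler class doubles under the orientation cover, the obstruction term doubles as well, yielding
$$\tilde{M}=(0,o_{1}\,|\,(q,p),(q,p),(1,2b)).$$

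Second, I would apply Corollary 1.2 to the lifted action $\tilde{\varphi}:G\to Diff(\tilde{M})$. The obstruction term $2b$ is even, so by Remark 3 the obstruction condition is automatically satisfied for $\tilde{\varphi}$, and Corollary 1.2 yields $G$ as a subgroup of $\mathbb{Z}_{2}\times H$, where $H$ acts orientation-preservingly on the base $S^{2}$ of $\tilde{M}$.

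Third, I would identify $H$ by observing that $\tilde{M}$ has precisely the form treated by Corollary 4.4. There the orientation-preserving subgroup of the induced action on $S^{2}$ is constrained to be a subgroup of $Dih(\mathbb{Z}_{n})$, namely the rotations preserving the pair of (antipodal) cone points together with a rotation exchanging them. Hence $H$ is isomorphic to a subgroup of $Dih(\mathbb{Z}_{n})$, and combining with Corollary 1.2 gives $G\subset\mathbb{Z}_{2}\times Dih(\mathbb{Z}_{n})$. For the converse, the construction of \cite{peet2018finite} realizes any subgroup of $\mathbb{Z}_{2}\times Dih(\mathbb{Z}_{n})$ as a fiber- and orientation-preserving action on $M$, by descending from a compatible action on $\tilde{M}$ that commutes with the orientation-reversing deck transformation.

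The main obstacle is the bookkeeping in the first step: correctly determining the Seifert invariants of $\tilde{M}$, particularly that the obstruction term lifts to $2b$ rather than, say, $b$, so that the result fits into Corollary 4.4 (rather than requiring the Lens-space exception discussed in Remark 4). Once $\tilde{M}$ is placed in the $(0,o_{1}|(q,p),(q,p),(1,\textrm{even}))$ form, the remainder is a direct assembly of Corollary 1.2 and Corollary 4.4.
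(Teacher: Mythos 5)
Your proposal is correct and follows essentially the same route as the paper: pass to the orientable double cover $\tilde{M}=(0,o_{1}|(q,p),(q,p),(1,2b))$, note the obstruction term $2b$ is even so the obstruction condition holds, use the structure theorem from \cite{peet2018finite} (Corollary 1.2) to get $G\subset\mathbb{Z}_{2}\times H$, and pin down $H$ as a subgroup of $Dih(\mathbb{Z}_{n})$ because the induced orientation-preserving action on $S^{2}$ must preserve or exchange the two cone points. The only cosmetic difference is that you quote Corollary 1.2 and the dihedral identification from Corollary 4.4, whereas the paper re-derives the $\mathbb{Z}_{2}\times Dih(\mathbb{Z}_{n})$ bound directly from the product structure on $\hat{\tilde{M}}$ (with $\epsilon(g)=\pm1$ on the fiber factor) and Table 1 actions 3/4.
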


\begin{proof}
Let $\tilde{M}=(0,o_{1}|(q,p),(q,p),(1,2b))$ be the orientable base space double cover of $M$. According to \cite{peet2018finite}, we consider a corresponding finite action $\tilde{\varphi}:G\rightarrow Diff_{+}^{fop}(\tilde{M})$ that commutes with the covering translation $\tau:\tilde{M}\rightarrow\tilde{M}$. 

Now note that the induced action $\tilde{\varphi}_{S^{2}}:G_{S^{2}}\rightarrow Diff(S^{2})$ can exchange two points but must be orientation-preserving as $\tilde{\varphi}:G\rightarrow Diff_{+}^{fop}(\tilde{M})$ is fiber-orientation-preserving. We can then assume that $\varphi_{S^{2}}$ is in the form of actions 3/4. Then, $G_{S^{2}}\cong Dih(\mathbb{Z}_{n})$ for some $n\in\mathbb{N}$. Again by Table 2, it will satisfy the obstruction condition as $2b$ is even. 

So now apply the results of \cite{peet2018finite} to note that there is a restricted action $\widehat{\tilde{\varphi}}:G\rightarrow Diff(\hat{\tilde{M}})$ and product structure $k:S^{1}\times F\rightarrow\widehat{\tilde{M}}$ ($F$ is in fact an annulus) so such that $(k^{-1}\circ\widehat{\tilde{\varphi}}(g)\circ k)(u,x)=(\epsilon(g)u,\widehat{\tilde{\varphi}}_{2}(g)(x))$ for $\epsilon(g)=\pm1$. We then note that $\widehat{\tilde{\varphi}}_{2}(G)\cong G_{S^{2}}\cong Dih(\mathbb{Z}_{n})$. Hence, $\widehat{\tilde{\varphi}}(G)\cong G$ is isomorphic to a subgroup of $\widehat{\tilde{\varphi}}_{1}(G)\times\widehat{\tilde{\varphi}}_{2}(G)\cong\mathbb{Z}_{2}\times Dih(\mathbb{Z}_{n})$. Again, our construction set out in \cite{peet2018} provides the converse. 
\end{proof}

\section{Elliptic 3-manifolds}
Recall that elliptic 3-manifolds are Seifert manifolds where $\chi_{orb}(B)>0$ and the Euler class of the Seifert bundle is nonzero. \cite{scott1983geometries} By \cite{thurstongeometry}, the orbifolds without boundary that have positive orbifold Euler characteristic are:

$$S^{2},S^{2}(q_{1}),S^{2}(q_{1},q_{2}),S^{2}(2,2,q),\mathbb{P}^{2}(q),S^{2}(2,3,3),S^{2}(2,3,4),S^{2}(2,3,5)$$

We note that by Proposition 3.2, all fiber-preserving actions on elliptic manifolds are orientation-preserving as the Euler class must be nonzero. Hence we can break down the possible base spaces and apply the results of the previous sections. In each subsection, suppose that we have a finite action $\varphi:G\rightarrow Diff^{fp}(M)$.

\subsection{Base space $S^{2}$}

These manifolds are lens spaces fibered without critical fibers. By \cite{kalliongis2002geometric}, these are of the form $L(p,q)$ where $q=\pm1\textrm{(mod }p)$. 

By Remark 2, we can only certainly work with even obstruction condition and in which case the lens space is constructed by two $(b,1)$ fillings of $S^{1}\times A$. We then calculate:

$$\left[\begin{array}{cc}
-1 & b\\
0 & 1
\end{array}\right]\left[\begin{array}{cc}
1 & 0\\
0 & -1
\end{array}\right]\left[\begin{array}{cc}
-1 & b\\
0 & 1
\end{array}\right]=\left[\begin{array}{cc}
1 & -2b\\
0 & -1
\end{array}\right]$$

Thus we have the lens spaces $L(2b,1)$ for nonzero $b\in\mathbb{Z}$. 

So now we apply Section 4 to state that the group $G$ will be a subgroup of a group of the form $(\mathbb{Z}_{m}\times H)\circ_{-}\mathbb{Z}_{2}$ where $H$ is a group from the list of groups that act orientation-preservingly on $S^{2}$ and $m\in\mathbb{N}$.

\subsection{Base space $S^{2}(q)$}

These manifolds are again lens spaces, but fibered with one critical fiber. All lens spaces can be given such a fibration except those of the form $L(p,q)$ where $q=\pm1\textrm{(mod }p)$ mentioned above. This follows from fibering one solid torus side of the Heegaard torus trivially and inducing a fibration on the other side. 

We can now apply Corollary 4.2. to find that the group $G$ is a subgroup of $Dih(\mathbb{Z}_{m}\times\mathbb{Z}_{n})$ for $m,n\in\mathbb{N}$.

\subsection{Base space $S^{2}(q_{1},q_{2})$}

So $M=(0,o_{1}|(p_{1},q_{1}),(p_{2},q_{2}),(1,b))$. Once again, these manifolds are lens spaces, but now fibered with two critical fibers. All lens spaces can be fibered in this way.

We first consider $(p_{1},q_{1})\neq(p_{2},q_{2})$. Then $G$ is a subgroup of $Dih(\mathbb{Z}_{m}\times\mathbb{Z}_{n})$ for $m,n\in\mathbb{N}$ by Corollary 4.3.

If $(p_{1},q_{1})=(p_{2},q_{2})$ then our results only apply in the case where $b$ is even, in which case the manifold is not elliptic by Remark 3.

\subsection{Base space $S^{2}(2,2,q)$}

So $M=(0,o_{1}|(q,p),(2,1),(2,1),(1,b))$. These manifolds are now prism manifolds fibered longitudinally. We split into the two cases:

\underline{Case 1:} $(q,p)=(2,1)$

In this case we apply Corollary 4.7 to yield that $G$ is a subgroup of $(\mathbb{Z}_{m}\times Dih(\mathbb{Z}_{3}))\circ_{-}\mathbb{Z}_{2}$ for $m\in\mathbb{N}$.

\underline{Case 2:} $(q,p)\neq(2,1)$

In this case we instead apply Corollary 4.6 to yield that $G$ is a subgroup of $Dih(\mathbb{Z}_{m}\times\mathbb{Z}_{2})$ for $m\in\mathbb{N}$.

\subsection{Base space $\mathbb{P}^{2}(p)$}

These manifolds are again prism manifolds but fibered meridianally.

We apply Corollary 4.8 to yield that the group $G$ is a subgroup of $\mathbb{Z}_{2}\times Dih(\mathbb{Z}_{n})$ for some $n\in\mathbb{N}$.

\subsection{Base space $S^{2}(2,3,3)$}

In this case, $M=(0,o_{1}|(2,1),(3,p_{1}),(3,p_{2}),(1,b))$ for $p_{1}=1,2$ and $p_{2}=1,2$. We hence break into the two possible cases:

\underline{Case 1:} $p_{1}=p_{2}$

In this case we apply Corollary 4.6 to yield that $G$ is a subgroup of $(\mathbb{Z}_{m}\times Dih(\mathbb{Z}_{3}))\circ_{-}\mathbb{Z}_{2}$ for $m\in\mathbb{N}$.

\underline{Case 2:} $p_{1}\neq p_{2}$

In this case we instead apply Corollary 4.5 to yield that $G$ is a subgroup of $Dih(\mathbb{Z}_{m})$ for some $m\in\mathbb{N}$. 

\subsection{Base spaces $S^{2}(2,3,4)$ and $S^{2}(2,3,5)$}

In both of these cases $M=(0,o_{1}|(2,1),(3,p_{1}),(q_{2},p_{2}),(1,b))$ for ${q_{2}\neq2,3}$. Hence we apply apply Corollary 4.7 to yield that $G$ is a subgroup of $Dih(\mathbb{Z}_{m})$ for some $m\in\mathbb{N}$. 

\section{Quotient spaces}

We now consider the quotient spaces under these constructed actions.

\subsection{General outline of construction}

We first note that an orientation and fiber-preserving action on a fibered torus will have quotient type either a torus or a $S^{2}(2,2,2,2)$. This follows from \cite{scott1983geometries} and the fact that $S^{2}(2,3,6)$, $S^{2}(3,3,3)$, and $S^{2}(2,4,4)$ cannot be Seifert fibered. We then consider the quotient of $\hat{M}\cong S^{1}\times F$ under a product action and the stabilizers of the boundary tori. Here, $F$ will be a disc with holes. There will be then glued in either a solid torus with exceptional core or a Conway ball.

The main part is to establish what form the quotients of $\hat{M}$ and each $V_{i}$ will be, and then how the gluing maps look under the projection.

Formally, for a representation $M=(0,o_{2}|(q_{1},p_{1}),\ldots,(q_{n},p_{n}))$, we take an action $\varphi:G\rightarrow Diff_{+}^{fp}(M)$ that restricts to an action $\hat{\varphi}:G\rightarrow Diff_{+}^{fp}(\hat{M})$ which leaves some fibering product structure $k:S^{1}\times F\rightarrow\hat{M}$ invariant and which extends over the fillings of a collection of fibered solid tori $X=V_{1}\cup\ldots\cup V_{n}$. We denote $\varphi_{X}:G\rightarrow Diff_{+}^{fp}(X)$ to be the restricted action on $X$.

We then let $\hat{p}:\hat{M}\rightarrow\hat{M}/\hat{\varphi}$ and $p_{X}:X\rightarrow X/\varphi_{X}$ be the quotient maps.

We then have the diagram:

$$\begin{array}{ccccc}
 &  & d|_{\partial V_{i}}\\
 & T_{i} & \leftarrow & \partial V_{i}\\
\hat{p}|_{T_{i}} & \downarrow &  & \downarrow & (p_{X})|_{\partial V_{i}}\\
 & T_{i}/Stab_{\hat{\varphi}}(T_{i}) & \leftarrow & \partial V_{i}/Stab_{\varphi_{X}}(\partial V_{i})\\
 &  & d'|_{\partial V_{i}/Stab_{\varphi_{X}}(\partial V_{i})}
\end{array}$$

We hence need to find the following:

• $\hat{M}/\hat{\varphi} $

• $V_{i}/Stab_{\varphi_{X}}(V_{i})$

• $d'|_{\partial V_{i}/Stab_{\varphi_{X}}(\partial V_{i})}$

\subsection{$\hat{M}/\hat{\varphi}$}

We first consider actions constructed via the method of \cite{peet2018} that are fiber-orientation-preserving. For this section we consider $F$ in the more general setting as any orientable surface with boundary.

\begin{lem} Let $\hat{\varphi}_{S^{1}\times F}:G\rightarrow Diff_{+}(S^{1})\times Diff_{+}(F)$ be a finite group action such that no element leaves an isolated fiber invariant. Then $(S^{1}\times F)/\hat{\varphi}_{S^{1}\times F}$ is a trivially fibered Seifert 3-manifold with fibering product structure $S^{1}\times(F/(\hat{\varphi}_{S^{1}\times F})_{F})$.
\end{lem}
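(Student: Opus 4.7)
First I would unpack the hypothesis. A fiber $S^{1}\times\{x\}$ is invariant under $g=(g_{1},g_{2})\in G$ precisely when $g_{2}(x)=x$, and for a nontrivial finite-order element of $Diff_{+}(F)$ the fixed point set is discrete: equivariant linearization around a fixed point exhibits $g_{2}$ locally as a nontrivial rotation on the tangent plane, whose only fixed point is the origin. Thus ``isolated'' is automatic whenever a nonidentity $g_{2}$ has any fixed point at all, and the hypothesis is equivalent to saying that any $g=(g_{1},g_{2})\in G$ with $g_{2}\neq\mathrm{id}$ has $g_{2}$ fixed-point free. Setting $K=\ker(G\to Diff_{+}(F))$ and writing $G_{F}$ for the image of $G$ in $Diff_{+}(F)$, this says precisely that $G_{F}$ acts freely on $F$; in particular $F/G_{F}$ is again a surface with boundary.

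Next I would introduce the $S^{1}$-action on $S^{1}\times F$ by left translation on the first factor. Because the $G$-action is a product action and $S^{1}$ is abelian, this $S^{1}$-action commutes with $G$ and descends to an $S^{1}$-action on the quotient $N:=(S^{1}\times F)/\hat{\varphi}_{S^{1}\times F}$. I would then compute the stabilizer of an arbitrary point $[(u,x)]\in N$: the condition $s\cdot[(u,x)]=[(u,x)]$ forces the existence of $g\in G$ with $(su,x)=(g_{1}u,g_{2}x)$, so $g_{2}(x)=x$ and $s=g_{1}$. By the previous paragraph $g_{2}=\mathrm{id}$, so $g\in K$ and $s$ lies in the image $K_{1}$ of $K$ in $Diff_{+}(S^{1})$; conversely every $s\in K_{1}$ fixes $[(u,x)]$. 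Every point of $N$ therefore has the same stabilizer $K_{1}$, and since any finite subgroup of $Diff_{+}(S^{1})$ is cyclic and, up to conjugation, acts by rotation, the induced $S^{1}$-action on $N$ factors through a free action of $S^{1}/K_{1}\cong S^{1}$.

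The orbit space of this free $S^{1}$-action on $N$ is $(S^{1}\times F)/(G\times S^{1})\cong F/G_{F}$, so $N$ is the total space of a principal $S^{1}$-bundle over the surface with boundary $F/G_{F}$. Any compact surface with nonempty boundary is homotopy equivalent to a finite graph, and principal $S^{1}$-bundles over a graph are trivial (by vanishing of $H^{2}$), so a bundle trivialization provides a diffeomorphism $N\cong S^{1}\times(F/G_{F})$ sending the orbits of the descended $S^{1}$-action onto the product circles $S^{1}\times\{\textrm{pt}\}$. Those orbits are precisely the images of the original fibers $S^{1}\times\{x\}$, so this diffeomorphism is a fibering product structure and witnesses that $N$ is trivially Seifert fibered, as claimed.

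The main obstacle will be the uniform-stabilizer computation in the middle paragraph, since it is the only step that requires combining the ``no isolated invariant fiber'' hypothesis (used through the free action of $G_{F}$ on $F$) with the cyclic structure of finite subgroups of $Diff_{+}(S^{1})$. Once the stabilizer is known to be constant and cyclic, the remaining reduction to a trivial circle bundle over a surface with boundary is routine bundle theory.
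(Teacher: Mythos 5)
Your proposal is correct in substance, but it takes a genuinely different route from the paper. The paper's own proof is very short: it simply asserts that the quotient is a trivially fibered Seifert manifold, writes the quotient map in the form $p_{\varphi_{S^{1}\times F}}(u,x)=(p_{1}(u,x),p_{2}(x))$, and reads off from the projection diagram that the base of the product structure is $F/(\hat{\varphi}_{S^{1}\times F})_{F}$ --- in other words, it takes for granted exactly the product/triviality statement that you actually prove. Your argument supplies that missing content: the hypothesis is unpacked into freeness of the induced action on $F$ (via isolated fixed points of nontrivial finite-order orientation-preserving surface diffeomorphisms), the translation $S^{1}$-action descends to the quotient with constant finite cyclic stabilizer, and the resulting free $S^{1}$-action exhibits the quotient as a principal circle bundle over $F/G_{F}$, which is trivial because a compact surface with nonempty boundary has vanishing $H^{2}$. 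This buys a more rigorous and self-contained proof of triviality, at the cost of invoking bundle classification where the paper relies on an (unjustified) explicit form of the quotient map.

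One step needs repair, though the repair is standard. The claim that the left-translation $S^{1}$-action on the first factor commutes with the $G$-action ``because $S^{1}$ is abelian'' is not correct as stated: the elements $\hat{\varphi}_{S^{1}}(g)\in Diff_{+}(S^{1})$ are only finite-order diffeomorphisms, and such a diffeomorphism commutes with all rotations only if it is itself a rotation. Your later identification ``$s=g_{1}$'' in the stabilizer computation makes the same implicit assumption that $g_{1}$ is literally an element of $S^{1}$. The fix is to note first that the finite subgroup $\hat{\varphi}_{S^{1}}(G)\subset Diff_{+}(S^{1})$ is cyclic and conjugate in $Diff_{+}(S^{1})$ to a group of rotations, and to absorb that conjugation into the product structure $k:S^{1}\times F\rightarrow\hat{M}$; after this normalization the translation action does commute with $G$, the stabilizer argument reads as you wrote it, and the rest of your proof (constant stabilizer, free residual $S^{1}$-action, trivial bundle over $F/G_{F}$, fibers mapping to product circles) goes through.
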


\begin{proof}
It is clear that $(S^{1}\times F)/\hat{\varphi}_{S^{1}\times F}$ is a trivially fibered Seifert 3-manifold. It remains to show that it has the fibering product structure $S^{1}\times(F/(\hat{\varphi}_{S^{1}\times F})_{F})$. We examine the diagram:

$$\begin{array}{cccc}
 &  & proj_{F}\\
 & S^{1}\times F & \rightarrow & F\\
p_{\varphi_{S^{1}\times F}} & \downarrow\\
 & (S^{1}\times F)/\hat{\varphi}_{S^{1}\times F}=S^{1}\times F' & \rightarrow & F'\\
 &  & proj_{F'}
\end{array}$$

Now, $p_{\varphi_{S^{1}\times F}}:S^{1}\times F\rightarrow S^{1}\times F'$ can be chosen so that $p_{\varphi_{S^{1}\times F}}(u,x)=(p_{1}(u,x),p_{2}(x))$. 

So now, we have $proj_{F'}(p(u,x))=p_{2}(x)$ and $p_{2}$ is the covering map for $(\hat{\varphi}_{S^{1}\times F})_{F}$. Hence $F'=F/(\hat{\varphi}_{S^{1}\times F})_{F}$.

\end{proof}

\begin{rem}

As $\hat{\varphi}_{S^{1}\times F}:G\rightarrow Diff_{+}(S^{1})\times Diff_{+}(F)$ will be extended over some fillings, if it does happen to leave some isolated fibers invariant, then we can simply drill out these fibers and restrict to $\hat{\varphi}'_{S^{1}\times F}:G\rightarrow Diff_{+}(S^{1})\times Diff_{+}(F')$ and then consider the resultant torus boundaries to be filled according to a $(1,0)$ filling. Therefore, for our purposes, we can without loss of generality assume that our action $\hat{\varphi}_{S^{1}\times F}:G\rightarrow Diff_{+}(S^{1})\times Diff_{+}(F)$ does not leave any isolated fibers invariant and so the previous lemma holds.

\end{rem}

We now allow the fibers to be reversed.

\begin{lem} Let $\hat{\varphi}_{S^{1}\times F}:G\rightarrow Diff(S^{1})\times Diff(F)$ be a finite, orientation-preserving group action so that $(\hat{\varphi}_{S^{1}\times F})_{+}:G_{+}\rightarrow Diff_{+}(S^{1})\times Diff_{+}(F)$ is such that no element leaves an isolated fiber invariant. Then any element that reverses the orientation on both components will induce some product involution $f=(f_{1},f_{2})$ of $S^{1}\times(F/((\hat{\varphi}_{S^{1}\times F})_{+}{}_{F})$ that also reverses the orientation on both components. Then $(S^{1}\times F)/\hat{\varphi}_{S^{1}\times F}$ is found by taking $I\times(F/(\hat{\varphi}_{S^{1}\times F})_{+}{}_{F})$ and identifying $(i,x)$ with $(i,f_{2}(x))$ for $i=0,1$ and leaving exceptional sets of order 2 as properly embedded arcs or circles according to the fixed point set of $f_{2}$.
\end{lem}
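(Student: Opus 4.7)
The plan is to apply Lemma 6.1 to the index-$2$ subgroup $G_+ \leq G$ consisting of elements that preserve orientation on both $S^1$ and $F$, and then analyze the residual $\mathbb{Z}_2$-action on the resulting product quotient. Because the action is orientation-preserving on the 3-manifold and has the product form $g = (g_1, g_2)$, each $g$ must either preserve or reverse both factor orientations simultaneously; thus the map $G \to \mathbb{Z}_2$ recording the $F$-sign is a homomorphism with kernel $G_+$, of index at most $2$. Applying Lemma 6.1 to $(\hat{\varphi}_{S^1 \times F})_+$ (whose hypothesis of no invariant isolated fibers is given) yields a trivially fibered product quotient $(S^1 \times F)/G_+ \cong S^1 \times F'$ with $F' := F/(G_+)_F$.

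Next I would show that any coset representative $g = (g_1, g_2) \in G \setminus G_+$ descends to a product involution on $S^1 \times F'$. The $F$-projection is a homomorphism since the action is a product, and conjugation by $g_2$ sends orientation-preserving diffeomorphisms to orientation-preserving diffeomorphisms, so $(G_+)_F$ is normalized by $g_2$. Hence $g_2$ descends to a well-defined diffeomorphism $f_2$ of $F'$, and similarly $g_1$ descends to $f_1$ on $S^1$; the pair yields an involution $f = (f_1, f_2)$ on $S^1 \times F'$ because $g^2 \in G_+$. Each $f_i$ inherits orientation-reversal from $g_i$ since the relevant covering map is orientation-preserving.

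Finally I would describe the quotient $(S^1 \times F')/\langle f \rangle$ explicitly. The reflection $f_1$ on $S^1$ has two fixed points, and $S^1/\langle f_1 \rangle$ is an interval $I$ whose endpoints are the images of these fixed points. Taking a fundamental half-circle parametrized by $I$, the $\langle f \rangle$-action is free over $\mathrm{int}(I)$, producing $\mathrm{int}(I) \times F'$ in the quotient. Over each endpoint $i \in \{0, 1\}$ the residual action on $\{i\} \times F'$ is by $f_2$, yielding the identification $(i, x) \sim (i, f_2(x))$ stated in the lemma. The fixed-point set of $f$ equals $\mathrm{Fix}(f_1) \times \mathrm{Fix}(f_2)$, whose image in the quotient is the order-$2$ exceptional set; since $f_2$ is an orientation-reversing involution of a surface, its fixed set is a properly embedded $1$-submanifold consisting of arcs with endpoints on $\partial F'$ and of circles, matching the described exceptional arcs or circles in the 3-orbifold.

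The main obstacle will be the careful bookkeeping around the descent of $g_2$ to $F'$, specifically that $(G_+)_F$ is normalized by $g_2$ and that the induced $f$ on $S^1 \times F'$ depends only on the coset $gG_+$ up to composition with elements acting trivially on the quotient, so the quotient is described by a single involution as claimed. Verifying that $\mathrm{Fix}(f_2)$ yields properly embedded arcs and circles, rather than isolated points, follows from the standard local model for orientation-reversing surface involutions.
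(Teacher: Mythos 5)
Your proposal is correct and follows essentially the same route as the paper: pass to the index-two subgroup $G_{+}$, apply Lemma 6.1 to get $S^{1}\times F'$, descend a coset representative to a product involution $f=(f_{1},f_{2})$ (an involution because $g^{2}\in G_{+}$), and quotient by it. You supply more detail than the paper does on the final step — the reflection $f_{1}$ giving $I$, the endpoint identifications by $f_{2}$, and the order-2 exceptional set as $\mathrm{Fix}(f_{1})\times\mathrm{Fix}(f_{2})$ — which the paper leaves implicit with "the result therefore follows."
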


\begin{proof}
If $g_{-}$ is an element of $G$, so that $\hat{\varphi}_{S^{1}\times F}(g_{-})$ reverses the orientation on both components, then we have some $f:S^{1}\times(F/((\hat{\varphi}_{S^{1}\times F})_{+}{}_{F})\rightarrow S^{1}\times(F/((\hat{\varphi}_{S^{1}\times F})_{+}{}_{F})$ so that $p_{(\hat{\varphi}_{S^{1}\times F})_{+}}\circ\varphi(g_{-})=f\circ p_{(\hat{\varphi}_{S^{1}\times F})_{+}}$. 

To see that $f$ is an involution requires only the observation that $G_{+}$ is an index two subgroup of $G$.

To see that it is a product, we note that if it does not preserve the product structure $S^{1}\times(F/((\hat{\varphi}_{S^{1}\times F})_{+}{}_{F})$, then $g_{-}$ cannot preserve the product structure $S^{1}\times F$. Hence $f$ is a product reversing the orientation on both components. 

The result therefore follows.

\end{proof}

\begin{cor} Let $F$ be a genus $0$ surface with boundary. Let$\hat{\varphi}_{S^{1}\times F}:G\rightarrow Diff(S^{1})\times Diff(F)$ be a finite, orientation-preserving group action so that $(\hat{\varphi}_{S^{1}\times F})_{+}:G_{+}\rightarrow Diff_{+}(S^{1})\times Diff_{+}(F)$ is such that no element leaves an isolated fiber invariant. Then any element that reverses the orientation on both components will induce some product involution $f=(f_{1},f_{2})$ of $S^{1}\times(F/((\hat{\varphi}_{S^{1}\times F})_{+}{}_{F})$ that also reverses the orientation on both components. Then $(S^{1}\times F)/\hat{\varphi}_{S^{1}\times F}$ is a ball  $B$ less a disjoint collection of balls and solid tori in the interior of $B$ with exceptional sets of order 2 as properly embedded arcs or circles according to the fixed point set of $f_{2}$.
\end{cor}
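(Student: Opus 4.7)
The first two sentences of the claim are immediate from Lemma 6.2, together with the observation that since $F$ is genus $0$ with boundary and $(\hat{\varphi}_{S^{1}\times F})_{+}$ is a finite orientation-preserving action, $F'=F/(\hat{\varphi}_{S^{1}\times F})_{+}{}_{F}$ is again genus $0$ with boundary. What remains is to identify the underlying $3$-manifold and exceptional set of $(S^{1}\times F)/\hat{\varphi}_{S^{1}\times F}$, which by Lemma 6.2 coincides with $I\times F'$ subject to $(i,x)\sim(i,f_{2}(x))$ on the two ends $i=0,1$.

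My plan is to realize $F'$ as $D^{2}\setminus(D_{1}^{\circ}\cup\cdots\cup D_{k}^{\circ})$, choosing the labelling so that the outer boundary $\partial D^{2}$ is $f_{2}$-invariant; I would first treat the case where this is possible (i.e., where some boundary component of $F'$ is setwise fixed by $f_{2}$), and handle the ``purely exchanging'' case separately. In the generic case, $f_{2}$ then extends to a reflection of $D^{2}$, and $f$ extends to a product involution $\tilde{f}=(f_{1},\tilde{f}_{2})$ on the ambient solid torus $S^{1}\times D^{2}$, whose fixed set is the two properly embedded arcs $\{p_{0},p_{1}\}\times\text{fix}(\tilde{f}_{2})$. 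A standard Smith-type argument for a product of reflections acting on a solid torus then shows that $(S^{1}\times D^{2})/\langle\tilde{f}\rangle$ has underlying space a $3$-ball $\mathcal{B}$, with those two arcs as properly embedded exceptional arcs of order $2$.

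To pass from $\mathcal{B}$ to $(S^{1}\times F)/\hat{\varphi}_{S^{1}\times F}$ I re-drill the pieces corresponding to the sub-discs $D_{i}$. These split under $f_{2}$ into setwise-fixed elements (each giving, via the same solid-torus Smith argument, an open ball removed from $\mathcal{B}$, carrying its own two exceptional arcs) and exchanged pairs $\{D_{i},D_{j}\}$ (on each of which $\tilde{f}$ acts freely, so the quotient of the two solid tori is a single solid torus, contributing an open solid torus removed from $\mathcal{B}$, with no local exceptional set). The arc and circle components of $\text{fix}(f_{2})\subset F'$ then account for all of $\text{fix}(f)=\{p_{0},p_{1}\}\times\text{fix}(f_{2})$ in the quotient, appearing as the claimed properly embedded exceptional arcs and interior exceptional circles. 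The main obstacle is the choice-of-outer-boundary step: showing that, up to relabelling, some boundary component of $F'$ can always be chosen $f_{2}$-invariant, or equivalently that the ``purely exchanging'' case is either reducible to the generic case or does not arise under the hypotheses on $\hat{\varphi}_{S^{1}\times F}$. This would require unpacking the action in more detail and appealing to the classification of orientation-reversing involutions of planar surfaces by their fixed-point sets.
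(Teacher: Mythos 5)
In the case where some boundary component of $F'=F/(\hat{\varphi}_{S^{1}\times F})_{+}{}_{F}$ is $f_{2}$-invariant, your argument is correct and is essentially a careful expansion of what the paper actually does: the paper's proof consists of citing Lemma 6.2 and asserting that the end identifications ``fold'' $S^{1}\times F'$ up into a ball with removed interior balls and solid tori. Your disc-embedding of $F'$, the identification of $(S^{1}\times D^{2})/\langle(\mathrm{refl},\mathrm{refl})\rangle$ with a $3$-ball containing two properly embedded order-$2$ arcs (the solid torus as double branched cover of $B^{3}$ over a trivial $2$-string tangle), and the re-drilling step distinguishing invariant holes (removed balls, each carrying two arcs) from exchanged pairs of holes (removed solid tori, no local branching) is precisely the bookkeeping the paper leaves implicit. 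One small point you can tidy: if an orientation-reversing involution of $F'$ leaves a boundary circle invariant, its restriction to that circle is automatically a reflection with two fixed points (near an invariant boundary circle the normal direction is preserved, so a free rotation there would force the involution to be orientation-preserving), so the extension to a reflection of $D^{2}$ is unproblematic.

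The obstacle you flag, however, is not a technicality you could have pushed through: the ``purely exchanging'' case does arise under the stated hypotheses, and there the conclusion genuinely fails, so it is neither reducible to the generic case nor excluded. Take $F$ the annulus $S^{1}\times[-1,1]$ and $G=\mathbb{Z}_{2}$ generated by $(u,(\theta,t))\mapsto(\bar{u},(\theta+\pi,-t))$; this is a finite, orientation-preserving product action, $G_{+}$ is trivial (one can also add fiber rotations to make $G_{+}$ nontrivial without changing anything), so the isolated-fiber hypothesis holds, and $f_{2}(\theta,t)=(\theta+\pi,-t)$ is a free orientation-reversing involution of $F'=F$ swapping its two boundary circles. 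The quotient $(S^{1}\times F)/G$ is the orientable twisted $I$-bundle over the Klein bottle: it has a single torus boundary component, its fundamental group is the Klein bottle group, and since the Klein bottle does not embed in $S^{3}$ neither does this quotient, so it is not a ball less interior balls and solid tori. Even with fixed points present the statement can fail when no boundary circle is invariant: $f_{2}(\theta,t)=(\theta,-t)$ swaps the boundary circles, and the quotient is a solid torus with two order-$2$ core-parallel exceptional circles, whose underlying space has no sphere boundary component at all. So the missing step in your plan is really a missing hypothesis in the corollary: the paper's one-sentence proof tacitly assumes that $f_{2}$ leaves some boundary component of $F'$ invariant (the ``outer'' circle in the disc-with-holes picture), which holds in the paper's intended applications but does not follow from the hypotheses as stated. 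Your instinct to isolate that case was exactly right; it is where the statement as written breaks down, not merely where your proof needed more detail.
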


\begin{proof}
From Lemma 6.2, we note that $S^{1}\times(F/((\hat{\varphi}_{S^{1}\times F})_{+}{}_{F})$ is simply another genus $0$ surface with boundary cross $I$. It then follows that the boundary identification will fold $S^{1}\times(F/((\hat{\varphi}_{S^{1}\times F})_{+}{}_{F})$ up to a ball with removed interior balls and solid tori and exceptional sets of order 2 as properly embedded arcs or circles according to the fixed point set of $f_{2}$. 
\end{proof}

\begin{exmp}
Consider $F \times S^{1}$ where $F$ is a disc with three discs removed. Then take a $Dih(\mathbb{Z}_2)$-action on $F \times S^{1}$ generated by $g_{1}$: an order $2$ rotation on $F$ fixing two of the boundary components and exchanging the other two with no rotation in the $S^{1}$ component, and $g_{2}$: the antipodal map on $F$ and a reflection on $S^{1}$.
Then $(F \times S^{1})/\langle g_{1} \rangle$ is an annulus cross $s^{1}$. $g_{2}$ induces an involution on this space consisting of the antipodal map on the annulus and a reflection in the $S^{1}$ component. This quotients to a ball with no interior balls removed and no exceptional set.
\end{exmp}

\begin{exmp}
Now consider again $F \times S^{1}$ where $F$ is a disc with three discs removed. This time take a $Dih(\mathbb{Z}_2)$-action on $F \times S^{1}$ generated by $g_{1}$: an order $2$ rotation in the $S^{1}$ component and the identity on $F$ and $g_{2}$: a reflection on $F$ that leaves two boundary components invariant, exchanging the other two, and a reflection in the $S^{1}$ component.
Here $(F \times S^{1})/\langle g_{1} \rangle$ is homeomorphic to $F \times S^{1}$ and $g_{2}$ induces the same map on the quotient space. This then quotients to the following space:

\begin{figure}[ht]
\centering
\includegraphics[height=5cm]{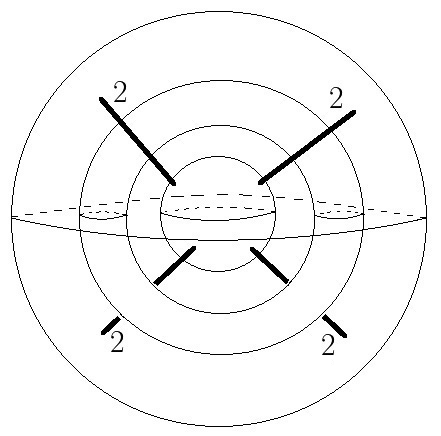}
\caption{Quotient under the action of Example 6.2 }
\end{figure}

\end{exmp}

\subsection{$V_{i}/Stab_{\varphi_{X}}(V_{i})$}

We begin by assuming that the action preserves the orientation of the fibers and note that the filling is of a fibered solid torus where the critical fiber is also an exceptional set. By \cite{kalliongis1991symmetries} the action of the stabilizer on $V_{i}$ will be a $\mathbb{Z}_{m}\times\mathbb{Z}_{l}$-action where $m$ divides $l$ with generators $\varphi(g_{1})(u,v)=(e^{2a\pi i}u,e^{2b\pi i}v)$ and $\varphi(g_{2})(u,v)=(e^{2c\pi i}u,e^{2d\pi i}v)$ where $a=\frac{a_{1}}{a_{2}},b=\frac{b_{1}}{b_{2}},c=\frac{c_{1}}{c_{2}},d=\frac{d_{1}}{d_{2}}$ are rational numbers. The quotient will then be a solid torus with an exceptional core of order $k$, where $m|k|l$. This follows again from \cite{kalliongis1991symmetries}.

\begin{lem} The quotient of a solid torus under a $\mathbb{Z}_{m}$-action with generator $\varphi(g_{1})(u,v)=(e^{2a\pi i}u,e^{2b\pi i}v)$ is a solid torus with exceptional core of order:

$$k=\frac{b_{2}}{gcd(a_{2},b_{2})}$$

\end{lem}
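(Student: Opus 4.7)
My plan is to extract the order of the exceptional core directly from the point-stabilizers of the $\mathbb{Z}_{m}$-action. The first step is to pin down $m$ in terms of $a_{2},b_{2}$: since $\varphi(g_{1})^{k}(u,v)=(e^{2\pi i ka}u,e^{2\pi i kb}v)$ equals the identity exactly when $ka\in\mathbb{Z}$ and $kb\in\mathbb{Z}$, i.e.\ when $a_{2}\mid k$ and $b_{2}\mid k$, the order of the generator is $m=\mathrm{lcm}(a_{2},b_{2})=a_{2}b_{2}/\gcd(a_{2},b_{2})$. In particular $m/a_{2}=b_{2}/\gcd(a_{2},b_{2})$, already the number we want to land on.

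Next I would verify that the quotient is a solid torus with the core of the original torus descending to the core of the quotient. Because each group element acts as a product of rotations on $S^{1}\times D^{2}$, the action preserves the meridian-disc foliation; one may either build the quotient explicitly by first quotienting the $S^{1}$ factor (yielding another $S^{1}$) and then folding each meridian disc by a rotation (yielding another disc), or simply invoke the analysis of linear $\mathbb{Z}_{m}$-actions in \cite{kalliongis1991symmetries}. Either way the quotient is a solid torus whose core is the image of $S^{1}\times\{0\}$.

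Finally I would compute the stabilizers. For $(u,v)$ with $v\neq0$, being fixed by $\varphi(g_{1})^{k}$ requires both $ka\in\mathbb{Z}$ and $kb\in\mathbb{Z}$, so $m\mid k$ and the stabilizer is trivial; hence the action is free off the core. For a core point $v=0$, only $ka\in\mathbb{Z}$ is required, so the stabilizer is the cyclic subgroup $\langle\varphi(g_{1})^{a_{2}}\rangle$ of order $m/a_{2}=b_{2}/\gcd(a_{2},b_{2})$. Combining these two facts, the image of the core is exactly the exceptional locus of the quotient and its order equals the stabilizer order, namely $k=b_{2}/\gcd(a_{2},b_{2})$. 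The main obstacle I would anticipate is not the arithmetic but the verification that the quotient inherits the correct smooth solid-torus structure with the core mapping to the core of the quotient; once this is granted, the stabilizer calculation hands the result off immediately.
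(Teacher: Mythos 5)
Your proposal is correct and follows essentially the same route as the paper: the paper's proof also isolates $\varphi(g_{1})^{a_{2}}$, notes it fixes the core pointwise and has order $\mathrm{lcm}(a_{2},b_{2})/a_{2}=b_{2}/\gcd(a_{2},b_{2})$, and reads off the order of the exceptional core from that. Your additional checks (freeness off the core and that the quotient is again a solid torus with core mapping to core) are left implicit in the paper, via the appeal to \cite{kalliongis1991symmetries}, but they do not change the argument.
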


\begin{proof} So $\varphi(g_{1})^{a_{2}}(u,v)=(u,e^{2a_{2}b\pi i}v)$ and $\varphi(g_{1})^{a_{2}}$ is an order $\frac{lcm(a_{2},b_{2})}{a_{2}}=\frac{b_{2}}{gcd(a_{2},b_{2})}$ element that fixes the core. The quotient space then has an exceptional core of order $k=\frac{b_{2}}{gcd(a_{2},b_{2})}$.
\end{proof}

\begin{lem} The quotient of a solid torus under a $\mathbb{Z}_{m}\times\mathbb{Z}_{l}$-action where $m$ divides $l$ with generators $\varphi(g_{1})(u,v)=(e^{2a\pi i}u,e^{2b\pi i}v)$ and $\varphi(g_{2})(u,v)=(e^{2c\pi i}u,e^{2d\pi i}v)$ is a solid torus with exceptional core of order:

$$k=\frac{b_{2}d_{2}gcd(a_{2},c_{2})}{gcd(d_{2}gcd(a_{2},c_{2})gcd(a_{2},b_{2}),a_{2}b_{1}c_{1}d_{2}z+b_{2}c_{2}d_{1})}$$

Where $z$ is such that $\frac{a_{1}z+1}{a_{2}}\in\mathbb{Z}$.
\end{lem}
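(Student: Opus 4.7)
The plan is to compute the quotient in two stages by iterated quotienting: first by $\langle g_{1}\rangle$ using Lemma~6.4, then by the descent of $g_{2}$ on the resulting solid torus. This makes the final formula a composition of two single-generator exceptional-order calculations, and it explains the appearance of $b_{2}/\gcd(a_{2},b_{2})$ in the numerator of $k$.

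The first stage is immediate: Lemma~6.4 applied to $\langle g_{1}\rangle$ gives $V_{1}:=V/\langle g_{1}\rangle$ as a solid torus with exceptional core of order $k_{1}=b_{2}/\gcd(a_{2},b_{2})$. Let $\pi_{1}\colon V\to V_{1}$ denote the quotient map. The technical heart of the proof is to install a product structure $S^{1}\times D^{2}\to V_{1}$ in which $\pi_{1}$ takes monomial form. Writing $g_{1}$-invariant generators requires a unimodular change of basis on the $(u,v)$-exponent lattice, and this is precisely where the integer $z$ with $a_{1}z+1\equiv 0\pmod{a_{2}}$ enters: $z$ is the Bezout inverse supplying the second column of the unimodular matrix that untwists the $g_{1}$-rotation. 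I expect to obtain coordinates $(u',v')$ on $V_{1}$ in which $\pi_{1}$ is expressed by explicit monomials in $u$ and $v$ whose exponents are integer combinations of $a_{1},a_{2},b_{1},b_{2},z$.

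Since $g_{1}$ and $g_{2}$ commute, $g_{2}$ descends to a diffeomorphism $\bar{g}_{2}\colon V_{1}\to V_{1}$ which in the new coordinates will again be a product rotation $\bar{g}_{2}(u',v')=(e^{2\pi i\alpha}u',e^{2\pi i\beta}v')$ for rationals $\alpha,\beta$. Substituting $g_{2}(u,v)=(e^{2\pi i c}u,e^{2\pi i d}v)$ into the coordinate change from the previous step yields $\alpha$ and $\beta$ as explicit rational functions of $c,d,a_{1},a_{2},b_{1},b_{2},z$. The expected outcome of that calculation is that the unreduced form of $\beta$ has numerator $a_{2}b_{1}c_{1}d_{2}z+b_{2}c_{2}d_{1}$ and denominator $b_{2}d_{2}\gcd(a_{2},c_{2})$, which is exactly the source of the numerator of the claimed formula and the second argument of its denominator gcd.

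Finally, apply Lemma~6.4 to $\bar{g}_{2}$ on $V_{1}$, contributing a further exceptional factor $k_{2}=\beta_{2}/\gcd(\alpha_{2},\beta_{2})$ in the reduced forms of $\alpha,\beta$. Because the original action is an injection, $\bar{g}_{2}$ acts faithfully on the orbifold tangent disk at each core point of $V_{1}$, so the two exceptional contributions combine multiplicatively and the total order is $k=k_{1}k_{2}$. Expanding $k_{1}k_{2}$ using the expressions above and simplifying the resulting gcds should produce the stated formula. The main obstacle will be Steps~2 and~3: the explicit monomial coordinate change on $V_{1}$ and keeping careful track of how $g_{2}$ transforms through it; the integer $z$ is the key number-theoretic input that makes the coordinate change integral and makes the transformed rotation rational with the claimed numerator.
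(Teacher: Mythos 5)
Your proposal is essentially the paper's own argument: the paper likewise puts the $\langle g_{1}\rangle$-quotient map into monomial form $p_{g_{1}}(u,v)=(u^{a_{2}},u^{z'}v^{k'})$ with $z'$ built from the Bezout integer $z$ satisfying $a_{1}z\equiv-1\ (\mathrm{mod}\ a_{2})$, descends $g_{2}$ to a product rotation on the solid torus $V(k')$, computes the order of its core-fixing power (your application of Lemma~6.4 to $\bar{g}_{2}$ is the same computation), and multiplies by $k'=b_{2}/\gcd(a_{2},b_{2})$. One bookkeeping note: the unreduced denominator of the descended $v$-rotation number comes out as $c_{2}d_{2}\gcd(a_{2},b_{2})$ rather than your predicted $b_{2}d_{2}\gcd(a_{2},c_{2})$ (the latter is the numerator of the final $k$), but this does not change the structure or outcome of the argument.
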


\begin{proof} We begin by noting that the quotient of the solid torus under the normal group generated by $g_{1}$ is $V(k')$ where $k'=\frac{b_{2}}{gcd(a_{2},b_{2})}$. We then claim the projection under the restricted action of $\left\langle g_{1}\right\rangle$, is $p_{g_{1}}(u,v)=(u^{a_{2}},u^{z'}v^{k'})$ where $z'=\frac{a_{2}b_{1}}{gcd(a_{2},b_{2})}z$ for $z$ such that $a_{1}z+a_{2}y=-1$. 

To prove this, we first note that:

$$p_{g_{1}}(e^{2a\pi i}u,e^{2b\pi i}v)=((e^{2a\pi i}u)^{a_{2}},(e^{2a\pi i}u)^{z'}(e^{2b\pi i}v)^{k'})=(u^{a_{2}},e^{2\pi i(az'+bk')}u^{z'}v^{k'})$$

So then:

$$az'+bk'=\frac{b_{1}}{gcd(a_{2},b_{2})}a_{1}z+\frac{b_{1}}{gcd(a_{2},b_{2})}=\frac{a_{1}b_{1}}{gcd(a_{2},b_{2})}(-1-a_{2}y)+\frac{b_{1}}{gcd(a_{2},b_{2})}=\frac{-a_{2}y}{gcd(a_{2},b_{2})}\in\mathbb{Z}$$

So that:

$$p_{g_{1}}(e^{2a\pi i}u,e^{2b\pi i}v)=(u^{a_{2}},u^{z'}v^{k'})$$

Also, if we solve $p_{g_{1}}(u,v)=(1,1)$, we yield $u^{a_{2}}=1$ and $u^{z'}v^{k'}=1$. So that there are $a_{2}k'=\frac{a_{2}b_{2}}{gcd(a_{2},b_{2})}$ possible solutions. This is the order of $g_{1}$.

Now, there is an induced map $\overline{\varphi(g_{2})}$ such that $\overline{\varphi(g_{2})}\circ p_{g_{1}}=p_{g_{1}}\circ\varphi(g_{2})$. 

We compute:

$$\overline{\varphi(g_{2})}(u^{a_{2}},u^{z'}v^{k'})	=(\overline{\varphi(g_{2})}\circ p_{g_{1}})(u,v)
	=(p_{g_{1}}\circ\varphi(g_{2}))(u,v)
	=p_{g_{1}}(e^{2c\pi i}u,e^{2d\pi i}v)
	=(e^{2a_{2}c\pi i}u^{a_{2}},e^{2\pi i(cz'+dk')}u^{z'}v^{k'})$$
	
It follows that $\overline{\varphi(g_{2})}(u,v)=(e^{2\pi a_{2}ci}u,e^{2\pi(cz'+dk')i}v)$.

So then $\overline{\varphi(g_{2})}^{\frac{c_{2}}{gcd(a_{2},c_{2})}}(u,v)=(u,e^{2\pi\frac{c_{2}}{gcd(a_{2},c_{2})}(cz'+dk')i}v)$.

Now $\overline{\varphi(g_{2})}^{\frac{c_{2}}{gcd(a_{2},c_{2})}}$ is an element that fixes the core of $V(k')$ and is of order the denominator of $\frac{c_{2}}{gcd(a_{2},c_{2})}(cz'+dk')$ when in reduced form. We calculate:

$$\frac{c_{2}}{gcd(a_{2},c_{2})}(cz'+dk')=\frac{c_{1}d_{2}z'+d_{1}c_{2}k'}{d_{2}gcd(a_{2},c_{2})}$$

Hence $\overline{\varphi(g_{2})}^{\frac{c_{2}}{gcd(a_{2},c_{2})}}$ has order $\frac{d_{2}gcd(a_{2},c_{2})}{gcd(d_{2}gcd(a_{2},c_{2}),c_{1}d_{2}z'+c_{2}d_{1}k')}$.

So finally, the order of the exceptional core of quotient space of the whole action is:

\begin{align*}
k	&=k'\frac{d_{2}gcd(a_{2},c_{2})}{gcd(d_{2}gcd(a_{2},c_{2}),c_{1}d_{2}z'+c_{2}d_{1}k')} \\
	&=\frac{b_{2}}{gcd(a_{2},b_{2})}\frac{d_{2}gcd(a_{2},c_{2})}{gcd(d_{2}gcd(a_{2},c_{2}),c_{1}d_{2}z'+c_{2}d_{1}k')} \\
	&=\frac{b_{2}d_{2}gcd(a_{2},c_{2})}{gcd(d_{2}gcd(a_{2},b_{2})gcd(a_{2},c_{2}),c_{1}d_{2}a_{2}b_{1}z+c_{2}d_{1}b_{2})} \\
\end{align*}

\end{proof}

We now consider an action of the stabilizer that reverses the orientation of the fibers. By \cite{kalliongis1991symmetries} the action will be a $Dih(\mathbb{Z}_{m}\times\mathbb{Z}_{l})$-action where $m$ divides $l$ with generators $\varphi(g_{1})(u,v)=(e^{2a\pi i}u,e^{2b\pi i}v)$, $\varphi(g_{2})(u,v)=(e^{2c\pi i}u,e^{2d\pi i}v)$, and $\varphi(g_{3})(u,v)=(u^{-1},v^{-1})$. We here note that similar to the proof of Lemma 6.2. we can consider the quotient of the $\mathbb{Z}_{m}\times\mathbb{Z}_{l}$-action and then the induced involution upon it. The following lemma then holds:

\begin{lem} The quotient of a solid torus under a $Dih(\mathbb{Z}_{m}\times\mathbb{Z}_{l})$-action where $m$ divides $l$ with generators $\varphi(g_{1})(u,v)=(e^{2a\pi i}u,e^{2b\pi i}v)$, $\varphi(g_{2})(u,v)=(e^{2c\pi i}u,e^{2d\pi i}v)$, and $\varphi(g_{3})(u,v)=(u^{-1},v^{-1})$ is a Conway ball with exceptional core of order:

$$k=\frac{b_{2}d_{2}gcd(a_{2},c_{2})}{gcd(d_{2}gcd(a_{2},c_{2})gcd(a_{2},b_{2}),a_{2}b_{1}c_{1}d_{2}z+b_{2}c_{2}d_{1})}$$

Where $z$ is such that $\frac{a_{1}z+1}{a_{2}}\in\mathbb{Z}$.
\end{lem}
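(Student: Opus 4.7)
The plan is to follow the two-step approach hinted at in the paragraph preceding the lemma: first quotient by the normal, orientation-preserving index-$2$ subgroup $H = \langle g_1, g_2 \rangle \cong \mathbb{Z}_m \times \mathbb{Z}_l$, and then analyze the residual involution $\bar g_3$ on the quotient solid torus. By Lemma 6.4, $V/H$ is the solid torus $V(k)$ with exceptional core of order $k$ as stated, so it suffices to identify $V(k)/\langle \bar g_3 \rangle$ with a Conway ball $B(k)$.

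Next, I would verify that $\bar g_3$ has the simple model form $(u,v) \mapsto (u^{-1}, v^{-1})$ in the standard coordinates on $V(k)$. Reusing the explicit projection from the proof of Lemma 6.4, on the intermediate quotient $V(k')$ one computes
\[
p_{g_1}(\varphi(g_3)(u,v)) = p_{g_1}(u^{-1}, v^{-1}) = (u^{-a_2}, u^{-z'}v^{-k'}),
\]
so the induced involution on $V(k')$ sends $(u,v)$ to $(u^{-1}, v^{-1})$. Repeating the same check after further quotienting by the induced cyclic action of $g_2$ (using the explicit form of $\overline{\varphi(g_2)}$ computed in Lemma 6.4) shows that $\bar g_3$ on $V(k)$ is again of the form $(u,v) \mapsto (u^{-1}, v^{-1})$ in the resulting coordinates.

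It then remains to analyze this concrete involution on $V(k)$ directly. Its fixed-point set consists of the two properly embedded arcs $\{\pm 1\} \times [-1,1]$, and it folds the exceptional core of order $k$ in half about its two fixed points. On the boundary torus this is the standard hyperelliptic involution with four fixed points, whose quotient is $S^2(2,2,2,2)$. A fundamental-domain argument (taking $\mathrm{Im}(u)\geq 0$, and then folding the two end-disks $\{\pm 1\}\times D^2$ along $v\sim \bar v$) shows the underlying space of the full quotient is a $3$-ball, and the exceptional set consists of the two fixed arcs (order $2$) joined by the image of the folded core (order $k$). This is precisely the Conway ball $B(k)$ of Figure 1, giving the stated conclusion with $k$ inherited unchanged from Lemma 6.4.

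The main obstacle is the verification in step two: one must be careful that after the double quotient the induced involution on $V(k)$ really has the model form $(u,v)\mapsto(u^{-1},v^{-1})$, rather than a twisted variant, since even a shear by a root of unity in the projection formulas could change the fixed-point combinatorics and hence the identification with a Conway ball. Once this normal form is confirmed, the identification of $V(k)/\langle\bar g_3\rangle$ with $B(k)$ is the standard picture of a solid torus modulo its hyperelliptic involution, enhanced by the folded order-$k$ core.
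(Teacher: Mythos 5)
Your proposal is correct and follows essentially the same route as the paper: quotient first by the index-two subgroup $\langle g_{1},g_{2}\rangle\cong\mathbb{Z}_{m}\times\mathbb{Z}_{l}$ (the preceding lemma gives $V(k)$ with the stated $k$) and then take the quotient of $V(k)$ by the induced orientation-preserving involution, which yields the Conway ball $B(k)$. In fact your write-up supplies details the paper leaves implicit, namely the verification via the explicit projection formulas that the induced involution on $V(k)$ has the model form $(u,v)\mapsto(u^{-1},v^{-1})$ and the fundamental-domain identification of its quotient with $B(k)$.
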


\begin{proof}
This follows from considering an orientation-preserving involution on $V(k)$ the quotient of the $\mathbb{Z}_{m}\times\mathbb{Z}_{l}$-action generated by $\varphi(g_{1})(u,v)=(e^{2a\pi i}u,e^{2b\pi i}v)$ and $\varphi(g_{2})(u,v)=(e^{2c\pi i}u,e^{2d\pi i}v)$.
\end{proof}

\subsection{$d'|_{\partial V_{i}/Stab_{\varphi_{X}}(\partial V_{i})}$}

We again begin by assuming that the action preserves the orientation of the fibers. So now $\hat{M}/\hat{\varphi}$ has a collection of boundary tori. These will be filled by solid tori with a possible exceptional core. It remains to show how the gluing map from the boundary of the solid tori into $\hat{M}/\hat{\varphi}$ will look.

By using product structures $k:S^{1}\times F\rightarrow\hat{M}$ and $k':S^{1}\times F/\hat{\varphi}_{F}\rightarrow\hat{M}/\hat{\varphi}$ that restrict to positively oriented product structures $k_{T_{i}}:S^{1}\times S^{1}\rightarrow T_{i}$ and $k'_{T'_{i}}:S^{1}\times S^{1}\rightarrow T'_{i}$, we can consider:

$$H_{1}(\hat{M})=\left\langle t,x_{1},\ldots,x_{s},a_{1},b_{1},\ldots,a_{g},b_{g}|x_{1}\cdots x_{s}=1,\textrm{all commute}\right\rangle$$

$$H_{1}(\hat{M}/\hat{\varphi})=\left\langle t',x'_{1},\ldots,x'_{s'},a'_{1},b'_{1},\ldots,a'_{g'},b'_{g'}|x'_{1}\cdots x'_{s'}=1,\textrm{all commute}\right\rangle $$

Here note that we again allow $F$ to be more generally any orientable surface with boundary as it presents no extra complication to the calculations.
$t,t'$ represent a fiber of $\hat{M}$ and $\hat{M}/\hat{\varphi}$ respectively; $x_{1},\ldots,x_{s}$ represent the boundary loops of $k(\{1\}\times F)$; and similarly $x'_{1},\ldots,x'_{s'}$ represent the boundary loops of $k'(\{1\}\times F/\hat{\varphi}_{F})$.

We then we have that:

$$(p_{\hat{\varphi}})_{*}(t)=t'^{a}, (p_{\hat{\varphi}})_{*}(x_{i})=x_{j(i)}^{\prime m_{j(i)}}t^{\prime l_{j(i)}}$$

Here $j:\{1,\ldots,s\}\rightarrow\{1,\ldots,s'\}$ is a surjection.

Note that this is well-defined as if $j(i_{1})=j(i_{2})$ then $(p_{\hat{\varphi}})_{*}(x_{i_{1}})=(p_{\hat{\varphi}})_{*}(\varphi(g)_{*}(x_{i_{2}}))$ for some $g\in G$. 

Now, $1=(p_{\hat{\varphi}})_{*}(x_{1}\cdots x_{s})=x_{1}^{\prime m_{1}\#j^{-1}(1)}\cdots x_{s'}^{\prime m_{s'}\#j^{-1}(s')}t^{\prime l_{1}\#j^{-1}(1)+\ldots+l_{s'}\#j^{-1}(s')}$.

So that $0=l_{1}\#j^{-1}(1)+\ldots+l_{s'}\#j^{-1}(s')$ and necessarily $m_{j(i_{1})}\#j^{-1}(i_{1})=m_{j(i_{2})}\#j^{-1}(i_{2})$ for any $i_{1},i_{2}\in\{1,\ldots,s\}$.

Now, for any torus (either on the boundary of $\hat{M}$ or on the boundary of one of the solid tori) we have that $Stab(T)\cong\mathbb{Z}_{m}$ or $\mathbb{Z}_{m}\times\mathbb{Z}_{l}$ where $m$ divides $l$. \cite{kalliongis1991symmetries}

So we consider the diagram:

$$\begin{array}{ccccc}
 &  & d|_{\partial V_{i}}\\
 & T_{i} & \leftarrow & \partial V_{i}\\
p_{\hat{\varphi}}|_{T_{i}} & \downarrow &  & \downarrow & p_{\varphi_{X}}|_{\partial V_{i}}\\
 & T'_{i'}=T_{i}/Stab(T_{i}) & \leftarrow & \partial V'_{i'}\\
 &  & d'|_{\partial V'_{i'}}
\end{array}$$

We begin with the cyclic case. We can then choose the product structure such that $(k_{T_{i}}^{-1}\circ\hat{\varphi}(g)\circ k_{T_{i}})(u,v)=(e^{2a\pi i}u,e^{2b\pi i}v)$ for some $a,b\in\mathbb{Q}$ and $g$ a generator of $Stab(T_{i})$. Letting $a=\frac{a_{1}}{a_{2}},b=\frac{b_{1}}{b_{2}}$ be fully reduced, this has order $lcm(a_{2},b_{2})$.

\begin{lem} If $Stab(T_{i})\cong\mathbb{Z}_{m}$, then $(k_{T'_{i'}}^{-1}\circ p_{\hat{\varphi}}\circ k_{T_{i}})(u,v)=(u^{\frac{lcm(a_{2},b_{2})}{b_{2}}}v^{l_{j(i)}},v^{b_{2}})$ where $l_{j(i)}$ is an integer such that $\frac{a_{1}lcm(a_{2},b_{2})}{a_{2}b_{2}}+l_{j(i)}\frac{b_{1}}{b_{2}}=\frac{a_{1}}{gcd(a_{2},b_{2})}+l_{j(i)}\frac{b_{1}}{b_{2}}$ is integer valued.
\end{lem}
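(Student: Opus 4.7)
The plan is to exhibit the explicit candidate $\phi(u,v) := (u^{lcm(a_2,b_2)/b_2}\, v^{l_{j(i)}}, v^{b_2})$, verify that it is invariant under the generator $g$ of $Stab(T_i) \cong \mathbb{Z}_m$ and has degree $m = lcm(a_2,b_2)$, and then identify the factored map $T_i/\mathbb{Z}_m \to S^1 \times S^1$ with the inverse of $k_{T'_{i'}}$. Existence of $l_{j(i)}$ comes first: the stated integrality condition is equivalent to the linear congruence $b_1 l \equiv -\frac{a_1 b_2}{gcd(a_2,b_2)} \pmod{b_2}$, which has a (unique, mod $b_2$) solution because $gcd(b_1, b_2) = 1$, the fraction $b_1/b_2$ being in lowest terms.

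Next I would check invariance of $\phi$ under $g$ by direct computation. For the second coordinate, $(e^{2b\pi i}v)^{b_2} = e^{2\pi i b_1} v^{b_2} = v^{b_2}$. For the first coordinate, one picks up the extra phase $e^{2\pi i (a \cdot lcm(a_2,b_2)/b_2 + b\, l_{j(i)})}$; using $lcm(a_2,b_2)/b_2 = a_2/gcd(a_2,b_2)$, the exponent simplifies to $a_1/gcd(a_2,b_2) + b_1 l_{j(i)}/b_2$, which lies in $\mathbb{Z}$ precisely by the choice of $l_{j(i)}$. Hence $\phi$ descends to a well-defined smooth map $\bar{\phi}: T_i/\mathbb{Z}_m \to S^1 \times S^1$.

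The map $\phi$ induces on $H_1$ the matrix $\left(\begin{smallmatrix} lcm(a_2,b_2)/b_2 & l_{j(i)} \\ 0 & b_2 \end{smallmatrix}\right)$ with determinant $lcm(a_2,b_2) = m$, so $\phi$ is a degree-$m$ covering and $\bar{\phi}$ is a smooth bijection, hence a diffeomorphism. The main (mild) obstacle is to check that $\bar{\phi}$ coincides with the inverse of the product structure $k_{T'_{i'}}$ inherited from the quotient fibering product structure of Lemma 6.1, rather than merely being \emph{some} diffeomorphism. This is the one step requiring care: concretely, the quotient fiber direction in $\hat{M}/\hat{\varphi}$ is obtained by modding the $S^1$ fiber by the subgroup of $\mathbb{Z}_m$ fixing a single fiber of $T_i$ (which has order $a_2/gcd(a_2,b_2) = lcm(a_2,b_2)/b_2$, accounting for the $u$-exponent), the quotient base direction is obtained by modding $\partial F_i$ by the induced rotation of order $b_2$ (accounting for the $v$-exponent on the second factor), and the twist $v^{l_{j(i)}}$ is the unique adjustment that synchronizes these two separately defined quotients into the single equivariant covering $p_{\hat{\varphi}}|_{T_i}$.
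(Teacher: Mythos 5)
Your proposal is correct and takes essentially the same route as the paper: the paper likewise reduces to the triangular monomial form $(u^{r}v^{s},v^{t})$ forced by fiber-preservation, pins down $t=b_{2}$ and $r=\frac{lcm(a_{2},b_{2})}{b_{2}}$ via equivariance under the generator and counting the $lcm(a_{2},b_{2})$ preimages of a point, and then takes $l_{j(i)}$ to be any solution of the same integrality condition. Your congruence argument for the existence of $l_{j(i)}$ (invertibility of $b_{1}$ mod $b_{2}$) and your explicit matching of the candidate map with the quotient product structure of Lemma 6.1 simply make explicit steps that the paper states more tersely.
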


\begin{proof} The projection $p_{\hat{\varphi}}|_{T_{i}}$ will need to send a fiber to a fiber, hence $(k_{T'_{i'}}^{-1}\circ p_{\hat{\varphi}}\circ k_{T_{i}})(u,v)=(u^{r}v^{s},v^{t})$. But now:

$$(u^{r}v^{s},v^{t})=(k_{T'_{i'}}^{-1}\circ p_{\hat{\varphi}}\circ k_{T_{i}})(e^{2a\pi i}u,e^{2b\pi i}v))=(e^{2(ar+bs)\pi i}u^{r}v^{s},e^{2bt\pi i}v^{t})$$

So then take $t=b_{2}$.

Now consider $(k_{T'_{i'}}^{-1}\circ p_{\hat{\varphi}}\circ k_{T_{i}})(u,v)=(u^{r}v^{s},v^{t})=(1,1)$. This should have $lcm(a_{2},b_{2})$ solutions. So $u^{r}=1$ has $\frac{lcm(a_{2},b_{2})}{b_{2}}$ solutions and $r=\frac{lcm(a_{2},b_{2})}{b_{2}}$.

Now, $ar+bs=a\frac{lcm(a_{2},b_{2})}{b_{2}}+bs\in\mathbb{Z}$. So let $s=l_{j(i)}$ be a solution to this. This exists as $gcd(a_{2},b_{2})$ divides $b_{2}$ by \cite{olds2000geometry}. There are however an infinite number of choices depending upon the product structure $k'_{T'_{i}}:S^{1}\times S^{1}\rightarrow T'_{i}$.

\end{proof}

The projection $p_{\varphi_{X}}|_{\partial V_{i}}$ will need to extend over the entire solid torus and so will need to send a meridian to a meridian. We can again choose the product structure such that $(k_{\partial V_{i}}^{-1}\circ\varphi_{X}(g)\circ k_{\partial V_{i}})(u,v)=(e^{2a\pi i}u,e^{2b\pi i}v)$. Hence $p_{\varphi_{X}}|_{\partial V_{i}}$ will similarly give $(k_{\partial V'_{i'}}^{-1}\circ p_{\hat{\varphi}}\circ k_{\partial V_{i}})(u,v)(u^{a_{2}},u^{z}v^{\frac{lcm(a_{2},b_{2})}{a_{2}}})$. Here the choice of $z$ will not affect the filling but depends upon the product structure $k'_{\partial V'_{i'}}:S^{1}\times S^{1}\rightarrow\partial V'_{i'}$.

We proceed with $Stab(T_{i})\cong\mathbb{Z}_{m}\times\mathbb{Z}_{l}$ where $m$ divides $l$. Then $(k_{T_{i}}^{-1}\circ\hat{\varphi}(g_{1})\circ k_{T_{i}})(u,v)=(e^{2a\pi i}u,e^{2b\pi i}v)$ and $(k_{T_{i}}^{-1}\circ\hat{\varphi}(g_{1})\circ k_{T_{i}})(u,v)=(e^{2c\pi i}u,e^{2d\pi i}v)$ for some $a,b,c,d\in\mathbb{Q}$ and $g_{1},g_{2}$ generators of $Stab(T_{i})$. Here $m=lcm(a_{2},b_{2})$ and $l=lcm(c_{2},d_{2})$. 

\begin{lem} If $Stab(T_{i})\cong\mathbb{Z}_{m}\times\mathbb{Z}_{l}$, then $(k_{T'_{i'}}^{-1}\circ p_{\hat{\varphi}}\circ k_{T_{i}})(u,v)=(u^{\frac{ml}{lcm(b_{2},d_{2})}}v^{l_{j(i)}},v^{lcm(b_{2},d_{2})})$ where $lcm(b_{2},d_{2})$ divides $l_{j(i)}$.
\end{lem}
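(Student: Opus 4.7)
The plan is to follow the same strategy as in Lemma~6.5, now incorporating the invariance condition coming from the second generator $g_{2}$.

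First, since $p_{\hat{\varphi}}$ must send fibers of $T_{i}$ to fibers of $T'_{i'}$, the induced map on product coordinates must take the form $(k_{T'_{i'}}^{-1}\circ p_{\hat{\varphi}}\circ k_{T_{i}})(u,v)=(u^{r}v^{s},v^{t})$ for integers $r,s,t$, exactly as in the cyclic case.

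Next, equivariance under each generator imposes linear conditions. Applying the map to $(e^{2a\pi i}u,e^{2b\pi i}v)$ and comparing with the image of $(u,v)$ gives $ar+bs\in\mathbb{Z}$ and $bt\in\mathbb{Z}$; applying it to $(e^{2c\pi i}u,e^{2d\pi i}v)$ similarly gives $cr+ds\in\mathbb{Z}$ and $dt\in\mathbb{Z}$. The two fiber-direction conditions force $b_{2}\mid t$ and $d_{2}\mid t$, so $\mathrm{lcm}(b_{2},d_{2})\mid t$, and we take $t$ minimal equal to $\mathrm{lcm}(b_{2},d_{2})$.

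Then I would determine $r$ by the same preimage count used in Lemma~6.5. The quotient map $T_{i}\to T'_{i'}$ is generically $ml$-to-one, while the map $(u,v)\mapsto(u^{r}v^{s},v^{t})$ has exactly $rt$ preimages over $(1,1)$ (namely $t$ roots of $v^{t}=1$, then $r$ solutions to $u^{r}=v^{-s}$ for each). Hence $rt=ml$, yielding $r=\frac{ml}{\mathrm{lcm}(b_{2},d_{2})}$ as claimed.

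Finally, the two remaining congruences $ar+bs,\,cr+ds\in\mathbb{Z}$ pin down $s=l_{j(i)}$ modulo $\mathrm{lcm}(b_{2},d_{2})$, with the residual indeterminacy corresponding to the freedom in choosing the product structure $k'_{T'_{i'}}$ on the quotient torus. The main obstacle I anticipate is verifying that these two congruences are \emph{simultaneously} solvable for some integer $l_{j(i)}$; this should reduce to a Bezout-style argument using the fact that $g_{1}$ and $g_{2}$ commute and generate an honest $\mathbb{Z}_{m}\times\mathbb{Z}_{l}$, which forces the required compatibility among the rotation numbers $a,b,c,d$ (in the same way that the one-generator case was handled via a single solvable congruence in Lemma~6.5).
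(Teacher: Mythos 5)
Your setup matches the paper's: the fiber-to-fiber condition gives the form $(u^{r}v^{s},v^{t})$, equivariance under both generators forces $b_{2}\mid t$ and $d_{2}\mid t$ so $t=lcm(b_{2},d_{2})$, and the preimage count over $(1,1)$ gives $r=\frac{ml}{lcm(b_{2},d_{2})}$. Up to that point you are reproducing the paper's argument.

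The gap is in the last step. The lemma does not merely assert that some $s$ solving $ar+bs\in\mathbb{Z}$ and $cr+ds\in\mathbb{Z}$ exists, nor that $s$ is determined modulo $lcm(b_{2},d_{2})$; it asserts the divisibility $lcm(b_{2},d_{2})\mid l_{j(i)}$, i.e.\ that $s$ must be a \emph{multiple} of $lcm(b_{2},d_{2})$. Your proposed ``Bezout-style simultaneous solvability'' argument would at best produce existence of a solution and cannot yield this divisibility; indeed, as you phrase it ($s$ pinned down mod $lcm(b_{2},d_{2})$, residue absorbed into the choice of $k'_{T'_{i'}}$) the conclusion of the lemma would not follow at all unless you show the residue class is $0$. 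The missing ingredient is the computation the paper does: using $m\mid l$, i.e.\ $lcm(a_{2},b_{2})\mid lcm(c_{2},d_{2})$, one gets $lcm(c_{2},d_{2})=lcm\bigl(lcm(a_{2},c_{2}),lcm(b_{2},d_{2})\bigr)$, and from this that
$$ar=\frac{a\,ml}{lcm(b_{2},d_{2})}\in\mathbb{Z}\quad\textrm{and}\quad cr=\frac{c\,ml}{lcm(b_{2},d_{2})}\in\mathbb{Z}.$$
Once $ar$ and $cr$ are themselves integers, the two congruences collapse to $bs\in\mathbb{Z}$ and $ds\in\mathbb{Z}$, forcing $b_{2}\mid s$ and $d_{2}\mid s$, hence $lcm(b_{2},d_{2})\mid s$, which is the statement to be proved. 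Without this reduction your outline proves a weaker (and for the intended application, insufficient) statement.
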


\begin{proof}
The projection $p_{\hat{\varphi}}|_{T_{i}}$ will again need to send a fiber to a fiber. Hence it will again be of the form $(k_{T'_{i'}}^{-1}\circ p_{\hat{\varphi}}\circ k_{T_{i}})(u,v)=(u^{r}v^{s},v^{t})$. Now:

$$(u^{r}v^{s},v^{t})=(k_{T'_{i'}}^{-1}\circ p_{\hat{\varphi}}\circ k_{T_{i}})(e^{2a\pi i}u,e^{2b\pi i}v))=(e^{2(ar+bs)\pi i}u^{r}v^{s},e^{2bt\pi i}v^{t})$$

$$(u^{r}v^{s},v^{t})=(k_{T'_{i'}}^{-1}\circ p_{\hat{\varphi}}\circ k_{T_{i}})(e^{2c\pi i}u,e^{2d\pi i}v))=(e^{2(cr+ds)\pi i}u^{r}v^{s},e^{2dt\pi i}v^{t})$$

Hence we take $t=lcm(b_{2},d_{2})$.

Now consider $(k_{T'_{i'}}^{-1}\circ p_{\hat{\varphi}}\circ k_{T_{i}})(u,v)=(u^{r}v^{s},v^{t})=(1,1)$. This should have $ml$ solutions. So $u^{r}=1$ has $\frac{ml}{lcm(b_{2},d_{2})}$ solutions and $r=\frac{ml}{lcm(b_{2},d_{2})}$.

Finally, $s$ is such that $\frac{aml}{lcm(b_{2},d_{2})}+bs\in\mathbb{Z}$ and $\frac{cml}{lcm(b_{2},d_{2})}+ds\in\mathbb{Z}$. 

We now calculate:
\begin{align*}
    lcm(a_{2},b_{2})lcm(c_{2},d_{2})	&=lcm(lcm(a_{2},b_{2}),lcm(c_{2},d_{2}))gcd(lcm(a_{2},b_{2}),lcm(c_{2},d_{2}))\\
	&=lcm(lcm(a_{2},b_{2}),lcm(c_{2},d_{2}))lcm(a_{2},b_{2})\\
\end{align*}

	So that:
\begin{align*}
	lcm(c_{2},d_{2})	&=lcm(lcm(a_{2},b_{2}),lcm(c_{2},d_{2}))\\
	&=lcm(lcm(a_{2},c_{2}),lcm(b_{2},d_{2}))
\end{align*} 
Then:

$$\frac{aml}{lcm(b_{2},d_{2})}=\frac{a_{1}lcm(a_{2},b_{2})lcm(c_{2},d_{2})}{a_{2}lcm(b_{2},d_{2})}=\frac{a_{1}lcm(a_{2},b_{2})lcm(lcm(a_{2},c_{2}),lcm(b_{2},d_{2}))}{a_{2}lcm(b_{2},d_{2})}\in\mathbb{Z}$$

Similarly, $\frac{cml}{lcm(b_{2},d_{2})}\in\mathbb{Z}$. So then we require that $bs,ds\in\mathbb{Z}$. Hence, $b_{2}$ and $d_{2}$ must divide $s$ and we take $s=l_{j(i)}$ to be a multiple of $lcm(b_{2},d_{2})$. \end{proof}

The projection $p_{\varphi_{X}}|_{\partial V_{i}}$ will need to extend over the entire solid torus and so will need to send a meridian to a meridian. We can again choose the product structure such that $(k_{\partial V_{i}}^{-1}\circ\varphi_{X}(g_{1})\circ k_{\partial V_{i}})(u,v)=(e^{2a\pi i}u,e^{2b\pi i}v)$ and $(k_{\partial V_{i}}^{-1}\circ\varphi_{X}(g_{1})\circ k_{\partial V_{i}})(u,v)=(e^{2c\pi i}u,e^{2d\pi i}v)$. Hence $p_{\varphi_{X}}|_{\partial V_{i}}$ will similarly give $(k_{\partial V'_{i'}}^{-1}\circ p_{\hat{\varphi}}\circ k_{\partial V_{i}})(u,v)=(u^{lcm(a_{2},c_{2})},u^{z}v^{\frac{ml}{lcm(a_{2},c_{2})}})$. Here the choice of $z$ will not again affect the filling but depends upon the product structure $k'_{\partial V'_{i'}}:S^{1}\times S^{1}\rightarrow\partial V'_{i'}$.

So now we have from above that $0=l_{1}\#j^{-1}(1)+\ldots+l_{s'}\#j^{-1}(s')$. Hence we have the degree of freedom to choose $c_{1},\ldots,c_{s'-1}$ (according to the conditions), but then $c_{s'}$ will be uniquely determined.

Each filling $d'|_{\partial V'_{i'}}$ will now be determined be solving:

$$(p_{\hat{\varphi}}|_{T_{i}})_{*}(d|_{\partial V_{i}})_{*}=(d'|_{\partial V'_{i'}})_{*}(p_{\varphi_{X}}|_{\partial V_{i}})_{*}$$

\begin{exmp} 

We consider a $Dih(\mathbb{Z}_{6}\times\mathbb{Z}_{12})$-action on the lens space $M=(0,o_{1}|(3,2),(1,5))$ constructed by:

$$f_{1}:S^{1}\times A\rightarrow S^{1}\times A, f_{1}(u,\rho v)=(e^{\frac{2\pi i}{6}}u,\rho e^{\frac{2\pi i}{3}}v)$$

$$f_{2}:S^{1}\times A\rightarrow S^{1}\times A, f_{1}(u,\rho v)=(u,\rho e^{\frac{2\pi i}{12}}v)$$

$$f_{2}:S^{1}\times A\rightarrow S^{1}\times A, f_{1}(u,\rho v)=(u^{-1},\rho v^{-1})$$

Here we parameterize $A=\{\rho v|1\leq\rho\leq2,v\in S^{1}\}$. Note that according to the product structure $S^{1}\times A$ one boundary torus is positively oriented and the other negatively depending on the orientation on the fiber. We take $S^{1}\times S^{1}$ to be positively oriented and $S^{1}\times2S^{1}$ to be negatively oriented.

We calculate first $(S^{1}\times A)/Dih(\mathbb{Z}_{6}\times\mathbb{Z}_{12})$. This will be $I\times A$ where $(0,\rho v)$ is identified with $(0,\rho v^{-1})$ and $(0,\rho v)$ is identified with $(0,\rho v^{-1})$ with four arcs of order 2. It will be $S^{2}\times S^{1}$ with four properly embedded arcs looking as shown in Figure 3: 

\begin{figure}[ht]
\centering
\includegraphics[height=4cm]{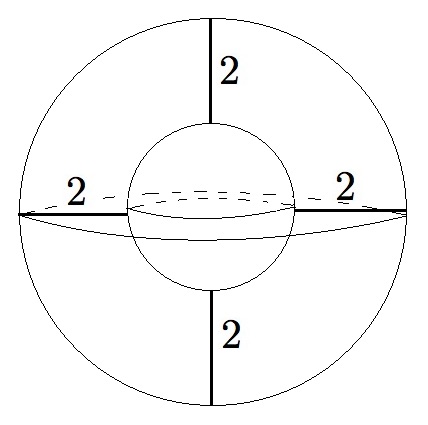}
\caption{Quotient space $(S^{1}\times A)/Dih(\mathbb{Z}_{6}\times\mathbb{Z}_{12})$}
\end{figure}

Next we compute the orders of the exceptional sets of the two Conway balls that fill the two boundary components. We first calculate the generators of the induced action on the solid tori $V_{1}$ and $V_{2}$ that correspond to the fillings $(3,2)$ and $(1,5)$.

Firstly, for $V_{1}$, we compute:

\begin{align*}
(d|_{\partial V_{1}}^{-1}\circ f_{1}\circ d|_{\partial V_{1}})(u,v)	&=(d|_{\partial V_{1}}^{-1}\circ f_{1})(u^{-1}v^{2},u^{-1}v^{3})\\
	&=d|_{\partial V_{1}}^{-1}(e^{\frac{2\pi i}{6}}u^{-1}v^{2},e^{\frac{2\pi i}{3}}u^{-1}v^{3})\\
	&=(e^{2\pi i(\frac{-3}{6}+\frac{2}{3})}u,e^{2\pi i(\frac{-1}{6}+\frac{1}{3})}v)\\
	&=(e^{\frac{2\pi i}{6}}u,e^{\frac{2\pi i}{6}}v)(d|_{\partial V_{1}}^{-1}\circ f_{2}\circ d|_{\partial V_{1}})(u,v)\\	&=(d|_{\partial V_{1}}^{-1}\circ f_{2})(u^{-1}v^{2},u^{-1}v^{3})\\
	&=d|_{\partial V_{1}}^{-1}(u^{-1}v^{2},e^{\frac{2\pi i}{12}}u^{-1}v^{3})\\
	&=(e^{2\pi i(\frac{2}{12})}u,e^{2\pi i(\frac{3}{12})}v)\\
	&=(e^{\frac{2\pi i}{6}}u,e^{\frac{2\pi i}{4}}v)
\end{align*}

	So then by Lemma 6.6, the exceptional set will have order:

$$k=\frac{(6)(12)gcd(6,6)}{gcd(12gcd(6,6)gcd(6,6),(6)(1)(1)(12)z+(6)(6)(11))}=\frac{432}{gcd(432,72z+396)}$$

Here $z$ is such that $\frac{z+1}{6}\in\mathbb{Z}$. So take $z=-1$ and then $k=\frac{432}{gcd(432,324)}=\frac{432}{108}=4$.

Secondly, for $V_{2}$, we compute:
\begin{align*}
(d|_{\partial V_{2}}^{-1}\circ f_{1}\circ d|_{\partial V_{2}})(u,v)	&=(d|_{\partial V_{2}}^{-1}\circ f_{1})(u^{-1}v^{5},v)\\
	&=d|_{\partial V_{2}}^{-1}(e^{\frac{2\pi i}{6}}u^{-1}v^{5},e^{-\frac{2\pi i}{3}}v)\\
	&=(e^{2\pi i(\frac{-1}{6}-\frac{5}{3})}u,e^{-\frac{2\pi i}{3}}v)\\
	&=(e^{\frac{2\pi i}{6}}u,e^{\frac{4\pi i}{3}}v)(d|_{\partial V_{2}}^{-1}\circ f_{2}\circ d|_{\partial V_{2}})(u,v)\\	&=(d|_{\partial V_{2}}^{-1}\circ f_{2})(u^{-1}v^{5},v)\\
	&=d|_{\partial V_{2}}^{-1}(u^{-1}v^{5},e^{-\frac{2\pi i}{12}}v)\\
	&=(e^{2\pi i(\frac{-5}{12})}u,e^{-\frac{2\pi i}{12}}v)\\
	&=(e^{\frac{14\pi i}{12}}u,e^{\frac{10\pi i}{12}}v)\\
\end{align*}

	So then again using Lemma 6.6, the exceptional set will have order:

$$k=\frac{(3)(12)gcd(2,12)}{gcd(12gcd(2,12)gcd(2,3),(2)(2)(7)(12)z+(3)(12)(5))}=\frac{72}{gcd(24,288z+180)}$$

Here $z$ is such that $\frac{z+1}{2}\in\mathbb{Z}$. So take $z=-1$ and then:

$$k=\frac{72}{gcd(24,108)}=\frac{72}{12}=6$$

We now compute the projection maps. By section 6.1, the projection map from both $S^{1}\times S^{1}$ and $S^{1}\times2S^{1}$ will have the matrix:

$$\left[\begin{array}{cc}
\frac{ml}{lcm(b_{2},d_{2})} & c\\
0 & lcm(b_{2},d_{2})
\end{array}\right]=\left[\begin{array}{cc}
6 & c\\
0 & 12
\end{array}\right]$$

Here $lcm(b_{2},d_{2})=12$ divides $c$, so we take $c=12$.

The projection map from $\partial V_{1}$ will have matrix: 

$$\left[\begin{array}{cc}
lcm(a_{2},c_{2}) & 0\\
z & \frac{ml}{lcm(a_{2},c_{2})}
\end{array}\right]=\left[\begin{array}{cc}
6 & 0\\
6 & 12
\end{array}\right]$$

The projection map from $\partial V_{2}$ will have matrix: 

$$\left[\begin{array}{cc}
lcm(a_{2},c_{2}) & 0\\
z & \frac{ml}{lcm(a_{2},c_{2})}
\end{array}\right]=\left[\begin{array}{cc}
12 & 0\\
6 & 6
\end{array}\right]$$

We now calculate the projected filling of $S^{1}\times S^{1}$ with $V_{1}$ by solving:

$$(p_{\hat{\varphi}}|_{S^{1}\times S^{1}})_{*}(d|_{\partial V_{1}})_{*}=(d'|_{\partial V'_{1}})_{*}(p_{\varphi_{X}}|_{\partial V_{1}})_{*}$$

$$\left[\begin{array}{cc}
6 & 12\\
0 & 12
\end{array}\right]\left[\begin{array}{cc}
-1 & 2\\
-1 & 3
\end{array}\right]=\left[\begin{array}{cc}
x' & p'\\
y' & q'
\end{array}\right]\left[\begin{array}{cc}
6 & 0\\
6 & 12
\end{array}\right]$$

This yields:

$$\left[\begin{array}{cc}
-18 & 48\\
-12 & 36
\end{array}\right]=\left[\begin{array}{cc}
6x'+6p' & 12p'\\
6y'+6q' & 12q'
\end{array}\right]$$

So then $p'=4, q'=3, x'=-7,$ and $y'=-5$.

We now calculate the projected filling of $S^{1}\times2S^{1}$ with $V_{2}$ by solving:

$(p_{\hat{\varphi}}|_{S^{1}\times2S^{1}})_{*}(d|_{\partial V_{2}})_{*}=(d'|_{\partial V'_{2}})_{*}(p_{\varphi_{X}}|_{\partial V_{2}})_{*}$

$$\left[\begin{array}{cc}
6 & 12\\
0 & 12
\end{array}\right]\left[\begin{array}{cc}
-1 & 5\\
0 & 1
\end{array}\right]=\left[\begin{array}{cc}
x' & p'\\
y' & q'
\end{array}\right]\left[\begin{array}{cc}
12 & 0\\
6 & 6
\end{array}\right]$$

This yields:

$$\left[\begin{array}{cc}
-6 & 42\\
0 & 12
\end{array}\right]=\left[\begin{array}{cc}
12x'+6p' & 6p'\\
12y'+6q' & 6q'
\end{array}\right]$$

So then $p'=7,q'=2,  x'=-4,$ and $y'=-1$. 

This fully characterizes the quotient space. We visualize in Figure 4:

\begin{figure}[ht]
\centering
\includegraphics[height=4cm]{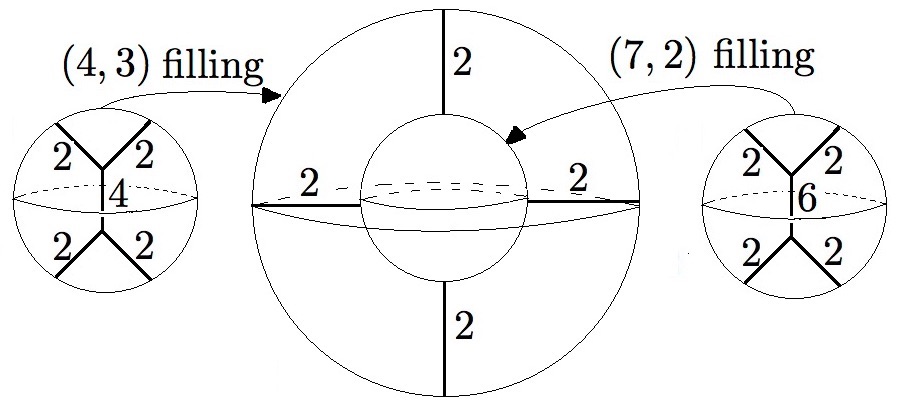}
\caption{Full quotient space}
\end{figure}

\end{exmp}

\section{Summary of results and future work}
In this paper we have studied the group actions on Seifert fibered elliptic manifolds using the results of \cite{peet2018} and \cite{peet2018finite}. We have extended the results of those papers by considering when an orientation-reversing action is possible and shown this can only happen if there are no critical fibers of order greater than $2$ and the Euler class is non-zero. These results allowed us to consider the possible base spaces of the Seifert manifolds and determine what the possible group actions are. As future work, Seifert manifolds that do admit orientation-reversing actions could be considered as well as a construction of such an action.

\bibliographystyle{unsrt}
\bibliography{references}

\begin{thebibliography}{10}

\bibitem{peet2018}
Benjamin Peet.
\newblock Finite, fiber- and orientation-preserving group actions on totally
  orientable seifert manifolds.
\newblock {\em Annales Mathematicae Silesianae}, (0), 2019.

\bibitem{peet2018finite}
Benjamin Peet.
\newblock Finite, fiber-and orientation-preserving actions on orientable
  seifert manifolds with non-orientable base space.
\newblock {\em JP Journal of Geometry and Topology}, 23, 2019.

\bibitem{lee2003smooth}
J.~M. Lee.
\newblock Smooth manifolds.
\newblock In {\em Introduction to Smooth Manifolds}, pages 1--29. Springer,
  2003.

\bibitem{neumann1978seifert}
W.~D. Neumann and F.~Raymond.
\newblock Seifert manifolds, plumbing, $\mu$-invariant and orientation
  reversing maps.
\newblock In {\em Algebraic and geometric topology}, pages 163--196. Springer,
  1978.

\bibitem{thurstongeometry}
W.~P. Thurston.
\newblock The geometry and topology of 3-manifolds.
\newblock {\em Lecture Notes}, 1979.

\bibitem{kalliongis2018}
John Kalliongis and Ryo Ohashi.
\newblock Finite actions on the 2-sphere, the projective plane and i-bundles
  over the projective plane.
\newblock {\em ARS MATHEMATICA CONTEMPORANEA}, 15(2):297--321, 2018.

\bibitem{kalliongis2002geometric}
J.~Kalliongis and A.~Miller.
\newblock Geometric group actions on lens spaces.
\newblock {\em Kyungpook mathematical journal}, 42(2):313--313, 2002.

\bibitem{scott1983geometries}
P.~Scott.
\newblock The geometries of 3-manifolds.
\newblock {\em Bulletin of the London Mathematical Society}, 15(5):401--487,
  1983.

\bibitem{kalliongis1991symmetries}
J.~Kalliongis and A.~Miller.
\newblock The symmetries of genus one handlebodies.
\newblock {\em Canad. J. Math}, 43(19911):371--404, 1991.

\bibitem{olds2000geometry}
G.~Davidoff C.~D.~Olds, A.~Lax and G.~P. Davidoff.
\newblock {\em The geometry of numbers}, volume~41.
\newblock Cambridge University Press, 2000.

\end{thebibliography}

\end{document}